\theoremstyle{plain}
\newtheorem{theorem}{Theorem}[section]
\newtheorem{lemma}[theorem]{Lemma}
\theoremstyle{definition}
\newtheorem{remark}[theorem]{Remark}
\newtheorem{example}[theorem]{Example}
\newtheorem{cor}[theorem]{Corollary}
\theoremstyle{remark}
\begin{document}
	
	%\title [Nu-Extreme contractions of $L(H)$ ]{Extreme contractions of $L(H)$ with respect to the numerical radius norm on $L(H).$}
	
	\title [Nu-Extreme contractions of $L(H)$ ]{Numerical radius norm and extreme contractions of $L(H)$ }
	
	%\title [Nu-Extreme contractions of $L(H)$ ]{Extreme contractions on a complex Hilbert space with respect to the numerical radius norm}
	
	%    Information for first author
	\author[Arpita Mal]{Arpita Mal}
	
	\address[]{Department of Mathematics\\ Indian Institute of Science\\ Bangalore 560012\\ India.}
	\email{arpitamalju@gmail.com}

	%\thanks will become a 1st page footnote.
	%\thanks{}
	%\thanks{The first author was supported in part by NSF Grant \#000000.}
	\thanks{The author would
		like to thank SERB, Govt. of India for the financial support in the form of National Post Doctoral Fellowship
		under the mentorship of Prof. Apoorva Khare.}
	%    Information for second author
	
	%    General info
	\subjclass[2010]{Primary 46B20, Secondary 47A12, 47L05 }
	\keywords{Numerical radius; extreme contraction; unitary operator; normaloid operator}
	
	%    Information for second author
	
	%    General info

	\date{}
	\maketitle
	\begin{abstract}
		Suppose $L(H)$ is the space of all bounded linear operators on a complex Hilbert space $H.$  This article deals with the problem of characterizing  the extreme contractions of $L(H)$ with respect to the numerical radius norm on $L(H).$ In contrast to the usual operator norm, it is proved that there exists a class of unitary operators on $H$ which are not extreme contractions when the numerical radius norm is considered on $L(H).$ Moreover, there are non-unitary operators on $H$ which are extreme contractions as far as the numerical radius norm is concerned.
	\end{abstract}

	\section{Introduction}
	The primary objective of this article is to explore the extreme contractions on a complex Hilbert space considering the numerical radius norm on the operator space. The difference of the extremal structure of an operator space endowed with the numerical radius norm and the usual operator norm is explored. %To proceed further, we introduce the notations and terminologies.
	\\
	Suppose $H$ is a complex Hilbert space. Let $S_{H}=\{x\in H:\|x\|=1\}$ be  the unit sphere of $H.$ Suppose $L(H)$ is the space of all bounded linear operators on $H.$ For an operator  $T\in L(H),$ the numerical radius of $T,$ denoted as $w(T),$ is defined as 
	\[w(T)=\sup\{|\langle Tx,x\rangle|:x\in S_H\}.\]
	It is well-known that $w(\cdot)$ defines a norm on $L(H).$ Moreover, $w(\cdot)$ is a weakly unitarily invariant norm on $L(H),$ i.e., if $U\in L(H)$ is a unitary operator and $T\in L(H)$ is an arbitrary operator, then $w(U^*TU)=w(T).$ For each operator $T\in L(H),$
	\[\frac{1}{2}\|T\|\leq w(T)\leq \|T\|.\] An operator $T\in L(H)$ is said to be a normaloid operator \cite{GR}, if $w(T)=\|T\|$ holds. Recall from \cite[Th. 6.5-1, p. 164]{GR} that, in a two-dimensional Hilbert space, the class of all normal operators coincides with the class of all normaloid operators. However, in general, the class of all normal operators is a proper subset of the class of all normaloid operators. 
	For more information on numerical radius, go through the classical references \cite{BD,GR}.
	The collection of all unit vectors of $H,$ where an operator $T\in L(H)$ attains its numerical radius is denoted by $M_{w(T)},$ i.e.,
	\[M_{w(T)}=\{x\in S_H:|\langle Tx,x\rangle|=w(T)\}.\]
	The real part and imaginary part of an operator $T\in L(H)$ are respectively denoted as $\Re(T)$ and $\Im(T),$ i.e., $\Re(T)=\frac{1}{2}(T+T^*)$ and $\Im(T)=\frac{1}{2i}(T-T^*).$ The real part and imaginary part of a scalar $\lambda$ are denoted as $\Re(\lambda)$ and $\Im(\lambda)$ respectively. A diagonal matrix of order $n\times n,$ whose $(i,i)$ entry is $d_i$ ($1\leq i\leq n$), is denoted by $diag(d_1,d_2,\ldots,d_n).$ The symbol $I$ denotes the identity operator. Throughout the article, when an operator is written in the form of a matrix, unless otherwise mentioned, we always assume that the matrix is written with respect to an orthonormal basis. \\
	An extreme point of the unit ball of an operator space is known as an extreme contraction. The study of extreme contraction was initiated in 1951 by Kadison in \cite{K}. It is well-known  from \cite{Ga,K} that an operator is an extreme contraction of $L(H),$ endowed with the usual operator norm, if and only if it is either an isometry or a co-isometry. Recall that an operator is said to be a co-isometry, if its adjoint operator is an isometry. Extreme contractions are still widely studied on Banach spaces with respect to the usual operator norm. Interested readers may go through \cite{G,Ki,MPD,SPM,Sh} and the references therein to see the difficulty of this problem. Since $w(\cdot)$ defines a norm on $L(H),$ it is interesting to explore the extreme contractions of the normed space $\big(L(H),w(\cdot)\big).$ For other geometric properties of an operator space endowed with the numerical radius norm, see \cite{M,MPS}. We call an extreme contraction of the space $\big(L(H),w(\cdot)\big)$ by numerical radius extreme contraction, in short, nu-extreme contraction. Thus, $T\in L(H)$ with $w(T)=1$ is a nu-extreme contraction, if whenever $T=tA+(1-t)B$ holds for some $t\in (0,1)$ and $A,B\in L(H)$ where $w(A)\leq1$ and $w(B)\leq 1,$ then $T=A=B.$ Moreover, note that, for any scalar $\lambda$ with $|\lambda|=1,$ $T$ is a nu-extreme contraction if and only if $\lambda T$ is a nu-extreme contraction. On the other hand, if $T$ is weakly unitarily invariant to $A,$ then $T$ is a nu-extreme contraction if and only if $A$ is a nu-extreme contraction.
	
	In the next section, we prove that if a normaloid operator is a nu-extreme contraction, then it is either an isometry or a co-isometry. In particular, on a finite-dimensional Hilbert space, a normaloid nu-extreme contraction must be a unitary. However, the converse is not true. More generally, we show that on a finite-dimensional Hilbert space, if a self-adjoint operator is not of the form $\pm I,$ then it cannot be a nu-extreme contraction. In Theorem \ref{th-2com}, we describe the normal operators which are nu-extreme contractions on two-dimensional Hilbert space. In Theorem \ref{th-ncom}, we generalize Theorem \ref{th-2com} and characterize normal operators which are nu-extreme contractions on finite-dimensional Hilbert space. In Theorem \ref{th-inf}, we prove that on an infinite-dimensional Hilbert space, there is no compact normal operator which is a nu-extreme contraction.   We further explore nu-extreme contractions on two-dimensional Hilbert space, which are not normal operators.

	\section{Main results}
	We begin this section with a necessary condition for a normaloid operator to be a nu-extreme contraction.
	\begin{theorem}\label{th-notuni}
		%Let $H$ be a complex Hilbert space. 
		Suppose $T\in L(H)$ is such that $w(T)=\|T\|=1.$ If $T$ is a nu-extreme contraction, then $T$ is either an isometry or a  co-isometry. 	\\
		In particular, if $\dim(H)<\infty,$  $w(T)=\|T\|=1$ and $T$ is a nu-extreme contraction, then $T$ is a unitary.
	\end{theorem}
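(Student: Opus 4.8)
The plan is to reduce the first assertion to the classical characterization of extreme contractions in the operator norm cited above from \cite{Ga,K}, namely that an extreme point of the operator-norm unit ball of $L(H)$ is exactly an isometry or a co-isometry. The crucial structural observation is a comparison of the two unit balls. Writing $B_{op}=\{S\in L(H):\|S\|\leq 1\}$ and $B_{w}=\{S\in L(H):w(S)\leq 1\}$, the inequality $w(S)\leq\|S\|$ shows immediately that $B_{op}\subseteq B_{w}$. The hypothesis $w(T)=\|T\|=1$ is precisely what places $T$ simultaneously inside $B_{op}$ (because $\|T\|\leq 1$) and on the boundary of $B_{w}$ as an alleged extreme point.

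Next I would invoke the elementary convexity principle that an extreme point of a convex set which happens to lie in a convex subset is automatically an extreme point of that subset. Concretely, suppose $T=tA+(1-t)B$ for some $t\in(0,1)$ and $A,B\in B_{op}$. Then $A,B\in B_{w}$ as well, so the assumption that $T$ is a nu-extreme contraction, i.e.\ an extreme point of $B_{w}$, forces $A=B=T$. Hence $T$ is an extreme point of $B_{op}$, that is, an extreme contraction of $L(H)$ with respect to the usual operator norm. Applying Kadison's theorem then yields that $T$ is either an isometry or a co-isometry, which is the first assertion.

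For the finite-dimensional statement I would simply note that when $\dim(H)<\infty$ every isometry is an injective linear endomorphism of a finite-dimensional space, hence surjective, and therefore unitary; the same conclusion holds for a co-isometry by passing to its adjoint. Combined with the first part, this forces $T$ to be a unitary.

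Since the argument is essentially a reduction, there is no single heavy computational step; the one point that genuinely requires care is the \emph{direction} of the inclusion $B_{op}\subseteq B_{w}$ and the recognition that the normaloid hypothesis $\|T\|=1$ is exactly what keeps $T$ inside the smaller ball $B_{op}$. Were only $w(T)=1$ assumed, $T$ could lie strictly outside $B_{op}$ (indeed $\|T\|$ could be as large as $2$), and the extreme-point transfer would fail entirely; so the role played by the condition $\|T\|=w(T)$ is the subtlety worth emphasizing in the write-up.
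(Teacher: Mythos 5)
Your proof is correct and is essentially the same argument as the paper's: both transfer extremality from the numerical-radius unit ball to the operator-norm unit ball via the inclusion $\{S:\|S\|\leq 1\}\subseteq\{S:w(S)\leq 1\}$ and then invoke Kadison's characterization, the only cosmetic difference being that the paper argues contrapositively (a nontrivial operator-norm decomposition of a non-isometry/non-co-isometry is also a numerical-radius decomposition) while you state the extreme-point transfer principle directly. Your closing remark on why the hypothesis $\|T\|=w(T)$ is indispensable is a nice touch, but the underlying route is identical.
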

	\begin{proof}
		Suppose $T$ is neither 	an isometry nor a co-isometry. 	Then by \cite{K}, $T$ is not an extreme contraction (with respect to the operator norm). Therefore, there exist $A,B\in L(H)$ such that $T=\frac{1}{2}A+\frac{1}{2}B,$ where $\|A\|=\|B\|=1$ and $T\neq A, T\neq B.$ Now, $w(A)\leq \|A\|=1$ and $w(B)\leq \|B\|=1$ prove that $T$ is not a nu-extreme contraction, a contradiction. Thus, $T$ is either an isometry or a co-isometry. 	\\
		The last part of the theorem follows from the fact that on a finite-dimensional Hilbert space,  every isometry is a unitary.
	\end{proof}

	As a consequence of Theorem \ref{th-notuni}, we get that  a positive operator, which is not the identity operator,  cannot be a nu-extreme contraction.

	\begin{cor}\label{cor-pos}
		Suppose $\dim(H)<\infty.$ Let $T\in L(H)$ be a positive operator such that $w(T)=1.$ If $T\neq I,$ then $T$ is not a nu-extreme contraction. 
	\end{cor}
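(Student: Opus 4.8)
The plan is to reduce the statement to Theorem \ref{th-notuni}. First I would observe that a positive operator $T$ is in particular self-adjoint, hence normal, hence normaloid; this gives $w(T)=\|T\|$. Indeed, for any self-adjoint operator the quantity $\sup_{x\in S_H}|\langle Tx,x\rangle|$ coincides with the operator norm, a standard fact. Consequently $\|T\|=w(T)=1$, so $T$ meets the hypotheses $w(T)=\|T\|=1$ required to invoke Theorem \ref{th-notuni}.

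Next I would argue by contradiction. Suppose $T$ is a nu-extreme contraction. Since $\dim(H)<\infty$, the ``in particular'' part of Theorem \ref{th-notuni} forces $T$ to be a unitary. But $T$ is also positive, so $T=T^*$, and the unitarity relation $T^*T=I$ collapses to $T^2=I$. Hence every eigenvalue $\lambda$ of $T$ satisfies $\lambda^2=1$, i.e.\ $\lambda\in\{-1,1\}$; positivity excludes $\lambda=-1$, leaving $\lambda=1$ as the sole eigenvalue. Since $T$ is self-adjoint it is unitarily diagonalizable, and a self-adjoint operator whose only eigenvalue is $1$ must equal $I$. This contradicts the hypothesis $T\neq I$, so $T$ cannot be a nu-extreme contraction.

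I do not anticipate any serious obstacle: the whole argument is a short deduction layered on top of Theorem \ref{th-notuni}. The only step deserving a moment's care is the identity $w(T)=\|T\|$ for self-adjoint $T$, which is precisely what lets the normaloid hypothesis of Theorem \ref{th-notuni} be satisfied automatically once one knows $w(T)=1$. After that, the observation that a \emph{positive} unitary is forced to be the identity closes the argument cleanly.
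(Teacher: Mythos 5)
Your proof is correct and follows essentially the same route as the paper: invoke Theorem \ref{th-notuni} in the finite-dimensional case and observe that a positive unitary operator must equal $I$, contradicting $T\neq I$. The only (welcome) difference is that you explicitly verify the hypothesis $w(T)=\|T\|=1$ via the normaloid property of self-adjoint operators, a step the paper leaves implicit.
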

	\begin{proof}
		Since $T$ is positive, $T$ is a normal operator. Therefore, $T=U^*DU$  for a unitary operator $U$ and a diagonal operator $D.$  If $T$ is a unitary, then all its eigenvalues will be $1,$ i.e., $D=I.$ Consequently $T=I,$ which is a contradiction. Therefore, $T$ is not a unitary. Now, it follows from Theorem \ref{th-notuni}  that $T$ is not a nu-extreme contraction.
	\end{proof}
	
	The following theorem provides us a large class of operators which are not nu-extreme contractions. We will use this theorem repeatedly in our next results.
	\begin{theorem}\label{th-diag}
		Let $H_1,H_2$ be  complex Hilbert spaces. Suppose $A\in L(H_1),B\in L(H_2)$ such that either of the following is true.\\
		\rm(i) $w(A)= 1$ and $A$  is not a nu-extreme contraction. \\
		\rm(ii) $w(B)= 1$ and $B$  is not a nu-extreme contraction. \\ Then for a non-zero operator $T,$ where
		\[ T=\left[ \begin{array}{cc}
			A &  O \\
			O & B
		\end{array} \right],
		\] $\frac{1}{w(T)}T$ is not a nu-extreme contraction.
	\end{theorem}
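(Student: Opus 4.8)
The plan is to reduce everything to the behaviour of the numerical radius under direct sums, and then to simply transport a nontrivial convex decomposition of $A$ (in case (i)) through the unchanged block $B$.

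The first step I would record is the elementary identity that for a block-diagonal operator $T=\begin{bmatrix} A & O\\ O & B\end{bmatrix}$ on $H_1\oplus H_2$ one has $w(T)=\max\{w(A),w(B)\}$. Writing a unit vector as $x=(x_1,x_2)$ with $\|x_1\|^2+\|x_2\|^2=1$ gives $\langle Tx,x\rangle=\langle Ax_1,x_1\rangle+\langle Bx_2,x_2\rangle$, which is a convex combination, with weights $\|x_1\|^2$ and $\|x_2\|^2$, of a point of the numerical range $W(A)=\{\langle Ay,y\rangle:y\in S_{H_1}\}$ and a point of $W(B)$; conversely each of $W(A),W(B)$ sits inside $W(T)$ by taking $x_2=0$ or $x_1=0$. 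Hence $W(T)=\mathrm{conv}\big(W(A)\cup W(B)\big)$, and since $\abs{\cdot}$ is convex its supremum over this convex hull equals $\max\{w(A),w(B)\}$. In particular, as $w(A)=1$, the scalar $c:=w(T)=\max\{1,w(B)\}\geq 1>0$, so $T\neq O$ and the normalization $\tfrac1c T$ is legitimate.

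Now assume (i). Since $w(A)=1$ and $A$ is not a nu-extreme contraction, there exist $A_1,A_2\in L(H_1)$ with $w(A_1)\leq 1$, $w(A_2)\leq 1$, and $A=tA_1+(1-t)A_2$ for some $t\in(0,1)$ with $A\neq A_1$. Set $T_i=\tfrac1c\begin{bmatrix} A_i & O\\ O & B\end{bmatrix}$ for $i=1,2$. Then clearly $\tfrac1c T=tT_1+(1-t)T_2$, and by the block formula above $w(T_i)=\tfrac1c\max\{w(A_i),w(B)\}\leq \tfrac1c\max\{1,w(B)\}=1$, using $w(A_i)\leq 1$ and $w(B)\leq c$. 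Since the $(1,1)$ blocks of $\tfrac1c T$ and $T_1$ are $\tfrac1c A$ and $\tfrac1c A_1$ with $A\neq A_1$, we get $\tfrac1c T\neq T_1$. Thus $\tfrac1c T$ is a genuine nontrivial convex combination of two operators lying in the numerical-radius unit ball, so it is not a nu-extreme contraction. Case (ii) is identical after interchanging the roles of the two blocks: decompose $B$ and keep $A$ fixed.

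The only nonroutine ingredient, and hence the main point requiring care, is the identity $w(A\oplus B)=\max\{w(A),w(B)\}$ (equivalently $W(A\oplus B)=\mathrm{conv}(W(A)\cup W(B))$): it is what guarantees simultaneously that $w(T)\geq 1$ and that the normalized summands $T_1,T_2$ remain in the unit ball. Everything else is a formal transport of the decomposition of $A$, the only subtlety being to note that $A\neq A_1$ already forces the full operators to differ, which rules out the degenerate coincidence $\tfrac1c T=T_1$.
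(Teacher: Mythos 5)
Your proof is correct and takes essentially the same approach as the paper: transport a nontrivial convex decomposition of the defective block through the direct sum, using $w(A\oplus B)=\max\{w(A),w(B)\}$ to guarantee both that $w(T)\geq 1$ and that the normalized summands stay in the numerical-radius unit ball. The only difference is cosmetic: you derive the block formula directly from the numerical range, while the paper cites it from Bhatia's \emph{Matrix Analysis}, and you use a general $t\in(0,1)$ where the paper uses a midpoint decomposition.
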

	\begin{proof}
		First assume that (i) holds. Then there exist $A_1,A_2\in L(H_1)$ with $w(A_1)\leq 1,~w(A_2)\leq 1,$ $A\neq A_1,A\neq A_2$ such that $A=\frac{1}{2}A_1+\frac{1}{2}A_2.$   Now, consider the operators
		\[ P=\left[ \begin{array}{cc}
			A_1 &  O\\
			O & B
		\end{array} \right],
		Q=\left[ \begin{array}{cc}
			A_2 &  O\\
			O & B
		\end{array} \right].
		\]
		Then from \cite[p. 10]{B}, we get
		$$w(P)=\max \{w(A_1),w(B)\}\leq \max\{1,w(B)\}= w(T).$$ Similarly, $w(Q)\leq w(T).$ Clearly, $P\neq T\neq Q.$ 
		Now, from $T=\frac{1}{2}P+\frac{1}{2}Q,$ it follows that $\frac{1}{w(T)}T$ is not a nu-extreme contraction. Similarly, if (ii) holds, then $\frac{1}{w(T)}T$ is not a nu-extreme contraction. This completes the proof of the theorem.
	\end{proof}
   Note that, Theorem \ref{th-diag} also gives us a necessary condition for an operator to be a nu-extreme contraction. However, in due course of time, we  will show that this necessary condition is not sufficient.
	Our next goal is to prove that the converse of Theorem \ref{th-notuni} is not true. For this, we first prove the following lemma.
	\begin{lemma}\label{lem-2dimself}
		Suppose $\dim(H)=2.$ Let $T\in L(H)$ be a self-adjoint operator such that $w(T)=1.$ If $T\neq \pm I,$ then $T$ is not a nu-extreme contraction.
	\end{lemma}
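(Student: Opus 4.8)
The plan is to reduce to a diagonal model and then exhibit an explicit nontrivial convex decomposition. Since $T$ is self-adjoint on a two-dimensional complex Hilbert space, the spectral theorem lets me choose an orthonormal basis of eigenvectors, so that $T$ is unitarily equivalent to $\mathrm{diag}(\lambda_1,\lambda_2)$ with $\lambda_1,\lambda_2\in\mathbb{R}$. Because $T$ is normal it is normaloid, whence $\max\{|\lambda_1|,|\lambda_2|\}=\|T\|=w(T)=1$. Using that nu-extremality is preserved under weak unitary equivalence and under multiplication by a unimodular scalar (both recorded in the Introduction), I may assume $\lambda_1=1$ and, since $T\neq\pm I$, that $\lambda_2\in[-1,1)$. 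Thus it suffices to treat $T=\mathrm{diag}(1,\lambda_2)$ with $\lambda_2<1$.

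Next I would fatten the (degenerate) numerical range of $T$ transversally by a small non-normal perturbation. Fix a real number $r$ with $0<r<\tfrac{1-\lambda_2}{2}$ and set
\[ P=\begin{bmatrix} 1 & r \\ -r & \lambda_2\end{bmatrix}. \]
Since the entries are real, $P^*=\begin{bmatrix}1 & -r\\ r & \lambda_2\end{bmatrix}$, and hence $\tfrac12(P+P^*)=\Re(P)=T$, while $P\neq T$ because $r\neq0$. As $w(P^*)=w(P)$, the identity $T=\tfrac12 P+\tfrac12 P^*$ will be the desired nontrivial convex combination once I check $w(P)\le1$.

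The verification of $w(P)\le1$ is a direct estimate on $\langle Px,x\rangle$. For a unit vector $x=(x_1,x_2)$ one computes $\langle Px,x\rangle=|x_1|^2+\lambda_2|x_2|^2+2ir\,\Im(\overline{x_1}x_2)$, so writing $s=|x_1|^2\in[0,1]$ and using $|\Im(\overline{x_1}x_2)|\le|x_1||x_2|=\sqrt{s(1-s)}$ gives
\[ |\langle Px,x\rangle|^2\le\big(\lambda_2+(1-\lambda_2)s\big)^2+4r^2 s(1-s). \]
The right-hand side is a quadratic in $s$ with leading coefficient $(1-\lambda_2)^2-4r^2>0$ by the choice of $r$; being convex, it attains its maximum on $[0,1]$ at an endpoint, namely $\lambda_2^2\le1$ at $s=0$ and $1$ at $s=1$. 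Hence $w(P)\le1$, which completes the argument.

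The point where care is needed is the choice of perturbation. The obvious candidates fail: a self-adjoint or an upper-triangular perturbation keeps an eigenvalue at $1$, and since $1$ lies on the unit circle the resulting elliptical numerical range bulges outward past $1$, forcing $w>1$; likewise the block form of Theorem \ref{th-diag} is useless here, because the $1\times1$ diagonal blocks are unimodular scalars, which are themselves nu-extreme. The role of the real antisymmetric perturbation above is precisely that it widens the numerical range only in the imaginary direction while leaving its real extent equal to $[\lambda_2,1]$, so the rightmost point stays exactly at $1$ and the radius is not increased.
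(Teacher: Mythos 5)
Your proof is correct, but it is structured differently from the paper's. The paper splits into two cases after diagonalizing: when the second eigenvalue satisfies $-1<d_2<1$, the operator is not unitary, and the paper simply invokes Theorem \ref{th-notuni} (which rests on Kadison's operator-norm characterization of extreme contractions, applicable because a self-adjoint operator is normaloid); only in the boundary case $d_2=-1$ does the paper produce an explicit decomposition, namely $diag(1,-1)=\frac{1}{2}A+\frac{1}{2}A^*$ with $A=\left[\begin{smallmatrix}1 & i\\ i & -1\end{smallmatrix}\right]$, verifying $w(A)=1$ by conjugating $A$ to $\left[\begin{smallmatrix}0 & 2i\\ 0 & 0\end{smallmatrix}\right]$. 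Your decomposition $T=\frac{1}{2}P+\frac{1}{2}P^*$ with the scaled skew-Hermitian perturbation $\left[\begin{smallmatrix}0 & r\\ -r & 0\end{smallmatrix}\right]$, $0<r<\frac{1-\lambda_2}{2}$, is the same trick as the paper's case $d_2=-1$ (the paper's perturbation is a unitarily equivalent skew-Hermitian matrix at full strength $r=1$), but by scaling $r$ and verifying $w(P)\leq 1$ through the elementary convexity estimate on $|\langle Px,x\rangle|^2$, you make one construction cover all $\lambda_2\in[-1,1)$ uniformly. What this buys you is self-containedness: no case split, and no dependence on Theorem \ref{th-notuni} or Kadison's theorem. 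What the paper's route buys is brevity given machinery it has already established and will reuse anyway; also, its summands have numerical radius exactly $1$, whereas yours only need $w(P)\leq 1$, which the paper's definition of nu-extreme contraction indeed permits. All the details of your argument check out: the reduction to $\lambda_1=1$, $\lambda_2\in[-1,1)$ via unimodular scaling and weak unitary invariance is legitimate, the identity $\Re(P)=T$ with $P\neq T$ (hence also $P^*\neq T$) is right, and the quadratic in $s$ has positive leading coefficient $(1-\lambda_2)^2-4r^2$ by your choice of $r$, so its maximum on $[0,1]$ is attained at the endpoints, where the values are $\lambda_2^2$ and $1$.
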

	\begin{proof}
		Since $T$ is a self-adjoint operator, $T=U^*DU$ for a unitary operator $U$ and a diagonal operator $D.$  Clearly, $w(D)=1.$ Suppose that $D=diag(d_1,d_2),$ where $d_1,d_2\in \mathbb{R}.$ Since $T\neq \pm I,$ we get  $d_1\neq d_2,$  Without loss of generality, we may assume that $d_1=1.$ If
		$-1<d_2<1,$ then $D$ is not a unitary and consequently $T$ is not a unitary. Therefore, from Theorem \ref{th-notuni}, it follows that $T$ is not a nu-extreme contraction. Hence,  assume that $d_2=-1.$ Consider the operators 
		\[ A=\left[ \begin{array}{cc}
			1 & i \\
			i& -1
		\end{array} \right]
		\text{ and }
		V=\frac{1}{\sqrt{2}}\left[ \begin{array}{cc}
			i & 1 \\
			1& i
		\end{array} \right].
		\]
		Observe that $V$ is a unitary operator and 
		\[ VAV^*=\left[ \begin{array}{cc}
			0 & 2i \\
			0& 0
		\end{array} \right].
		\]
		Therefore, $w(A^*)=w(A)=w(VAV^*)=1.$ Moreover, 
		\begin{eqnarray*}
			D&=&\frac{1}{2}A+\frac{1}{2}A^*\\
			\Rightarrow 	T=U^*DU&=&\frac{1}{2}U^*AU+\frac{1}{2}U^*A^*U,
		\end{eqnarray*}
		where $w(U^*AU)=w(A)=1$ and $w(U^*A^*U)=w(A^*)=1.$ Now, $D\neq A$ and $D\neq A^*$ imply that $T=U^*DU\neq U^*AU$ and $T=U^*DU\neq U^*A^*U.$
		This proves that $T$ is not a nu-extreme contraction.
	\end{proof}
	
	We generalize Lemma \ref{lem-2dimself} in the next theorem.
	\begin{theorem}\label{th-self}
		Suppose $\dim(H)<\infty.$ Let $T\in L(H)$ be a self-adjoint operator with $w(T)=1.$ Suppose that $T\neq \pm I.$ Then $T$ is not a nu-extreme contraction.
	\end{theorem}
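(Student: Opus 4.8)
The plan is to reduce the general finite-dimensional case to the two-dimensional Lemma \ref{lem-2dimself} by splitting off a suitable $2\times 2$ block and then invoking Theorem \ref{th-diag}. Since $T$ is self-adjoint it is normal, so by the spectral theorem $T=U^*DU$ for some unitary $U$ and a real diagonal operator $D=diag(d_1,\ldots,d_n)$. Because the nu-extreme property is preserved under weak unitary equivalence (as recorded in the introduction), it suffices to show that $D$ is not a nu-extreme contraction. For a normal, hence diagonal, operator one has $w(D)=\|D\|=\max_i|d_i|$, so the hypothesis $w(T)=1$ forces $\max_i|d_i|=1$; after reordering the eigenbasis I may assume $|d_1|=1$.

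Next I would locate a two-dimensional coordinate subspace on which $D$ restricts to a self-adjoint operator that is neither $I$ nor $-I$. Since $|d_1|=1$ gives $d_1=\pm1$, the equality $D=d_1I$ would make $D=\pm I$; as $D\neq\pm I$, there must be an index $j$ with $d_j\neq d_1$. Writing $\{e_k\}$ for the eigenbasis, I decompose $H=\mathrm{span}\{e_1,e_j\}\oplus W$, where $W$ is spanned by the remaining basis vectors, so that $D=B_0\oplus D'$ with $B_0=diag(d_1,d_j)$ acting on the two-dimensional summand and $D'$ diagonal on $W$. This matches the block form of Theorem \ref{th-diag} with $A=B_0$ and $B=D'$. (When $n=2$ the statement is Lemma \ref{lem-2dimself} itself, so one may assume $D'$ is genuinely present, though the argument works verbatim with $W=\{0\}$.)

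The crucial point is that the block $B_0$ satisfies the hypotheses of both earlier results: it is self-adjoint, $w(B_0)=\max\{|d_1|,|d_j|\}=1$ (using $|d_1|=1$ and $|d_j|\leq 1$), and $B_0\neq\pm I$ because $d_1\neq d_j$. Hence Lemma \ref{lem-2dimself} shows that $B_0$ is not a nu-extreme contraction. Applying Theorem \ref{th-diag}(i) to $D=B_0\oplus D'$ then yields that $\tfrac{1}{w(D)}D$ is not a nu-extreme contraction; since $w(D)=1$ this operator is exactly $D$. Therefore $D$, and hence $T=U^*DU$, is not a nu-extreme contraction.

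The argument is short once Lemma \ref{lem-2dimself} and Theorem \ref{th-diag} are available, so I do not anticipate a serious obstacle; the only points requiring care are verifying that the chosen block $B_0$ has numerical radius \emph{exactly} $1$, so that the hypothesis $w(A)=1$ of Theorem \ref{th-diag} genuinely holds, and that $w(D)=1$, so that the normalizing factor $\tfrac{1}{w(T)}$ is trivial and the conclusion is indeed about $D$ itself. Both facts rest on the equality of the numerical radius and the spectral radius for normal operators, which is why selecting the max-modulus eigenvalue $d_1$ as one coordinate of the block is essential.
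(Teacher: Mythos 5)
Your proposal is correct and follows essentially the same route as the paper's own proof: conjugate to a real diagonal operator, split off a $2\times 2$ block $diag(d_1,d_j)$ containing a unit-modulus eigenvalue and a distinct eigenvalue, apply Lemma \ref{lem-2dimself} to that block, and conclude via Theorem \ref{th-diag}. The only cosmetic difference is that the paper reorders the eigenvalues so the offending block is $diag(d_1,d_2)$, while you keep the index $j$ explicit and spell out the weak-unitary-invariance and $w(D)=1$ bookkeeping slightly more carefully.
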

	\begin{proof}
	The case $\dim(H)=2$ is considered in Lemma \ref{lem-2dimself}. Here we consider only the case $\dim(H)>2.$ Since $T$ is a self-adjoint operator, $T=U^*DU,$ where $U$ is a unitary operator  and $D$ is a diagonal operator.  Clearly, $w(D)=1.$ Let
		$$ D=diag(d_1,d_2,\ldots,d_n),$$
		where $d_j\in\mathbb{R} $ and $|d_j|\leq 1$ for all $1\leq j\leq n.$ Without loss of generality, assume that $|d_1|=1.$ Since  $T\neq \pm I,$ i.e., $D\neq \pm I,$ we may consider that $d_2\neq d_1.$ Let 
		\[D_1=diag(d_1,d_2)~\text{and } D_2=diag(d_3,\ldots,d_n).\] Then 
		\[ D=\left[ \begin{array}{cc}
			D_1 &  O\\
			O & D_2
		\end{array} \right].
		\]
	Clearly, $w(D_1)=1,$ $D_1$ is a self-adjoint operator and $D_1\neq \pm I.$ Thus, from Lemma \ref{lem-2dimself} it follows that $D_1$ is not a nu-extreme contraction. Hence, by Theorem \ref{th-diag}, we get $T$ is not a nu-extreme contraction. This completes the proof. 
	\end{proof}
	Note that, Theorem \ref{th-self} provides us a class of unitary operators which are not nu-extreme contractions. Whereas, each unitary operator is an extreme contraction of $L(H),$ endowed with the operator norm. This theorem highlights the difference of the geometric properties of an operator space endowed with the numerical radius norm from operator norm.\\
	
	Our next aim is to characterize unitary operators which are nu-extreme contractions. To serve our purpose, we first prove the following lemma. 
	
	\begin{lemma}\label{lem-gen}
		Let $T\in L(H)$ be such that $w(T)=1.$ Suppose $T$ is in the convex hull of $\{A_1,A_2,\ldots,A_n\},$ where $A_i\in L(H)$ and $w(A_i)=1$ for all $1\leq i\leq n.$	If $x\in M_{w(T)},$ then $x\in M_{w(A_i)}$ and $\langle Tx,x\rangle=\langle A_ix,x\rangle$ for all $1\leq i\leq n.$
	\end{lemma}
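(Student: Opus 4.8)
The plan is to exploit the fact that $T$ lies in the convex hull, so we may write $T=\sum_{i=1}^n t_i A_i$ with $t_i>0$ and $\sum_i t_i=1$. Fix $x\in M_{w(T)}$, so that $|\langle Tx,x\rangle|=w(T)=1$. After multiplying $T$ (and simultaneously each $A_i$) by a suitable unimodular scalar $\lambda$, which changes none of the numerical radii and preserves the convex decomposition, I may assume that $\langle Tx,x\rangle=1$ is real and positive. The key inequality is then the elementary estimate
\begin{equation*}
1=\langle Tx,x\rangle=\sum_{i=1}^n t_i\langle A_ix,x\rangle=\Re\Big(\sum_{i=1}^n t_i\langle A_ix,x\rangle\Big)=\sum_{i=1}^n t_i\,\Re\langle A_ix,x\rangle\le\sum_{i=1}^n t_i\,|\langle A_ix,x\rangle|\le\sum_{i=1}^n t_i=1,
\end{equation*}
where the last step uses $|\langle A_ix,x\rangle|\le w(A_i)=1$ and $\|x\|=1$.

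Since the chain of inequalities begins and ends with $1$, every inequality must in fact be an equality. From $\sum_i t_i\,|\langle A_ix,x\rangle|=\sum_i t_i$ together with $t_i>0$ and $|\langle A_ix,x\rangle|\le1$, I conclude $|\langle A_ix,x\rangle|=1$ for each $i$; combined with $\|x\|=1$ this says precisely $x\in M_{w(A_i)}$. Similarly, equality in $\Re\langle A_ix,x\rangle\le|\langle A_ix,x\rangle|$ forces each $\langle A_ix,x\rangle$ to be real and nonnegative, hence equal to its modulus $1$. Therefore $\langle A_ix,x\rangle=1=\langle Tx,x\rangle$ for all $i$, which after undoing the normalization gives $\langle A_ix,x\rangle=\langle Tx,x\rangle$ in the original (unnormalized) setting.

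I do not anticipate a serious obstacle here, as the argument is essentially a rigidity statement for equality in a convex combination of complex numbers constrained to the closed unit disc. The one point requiring care is the initial reduction to the case $\langle Tx,x\rangle=1$: I must verify that multiplying by the unimodular scalar $\lambda=\overline{\langle Tx,x\rangle}$ leaves the hypotheses intact, namely that $w(\lambda A_i)=|\lambda|\,w(A_i)=1$ and that $\lambda T=\sum_i t_i(\lambda A_i)$ remains a convex combination with the same weights. Once that is in place, the equality analysis is purely the two-term estimate $\Re z\le|z|\le1$ applied termwise, with the strict positivity of the weights $t_i$ doing the work of separating the individual terms.
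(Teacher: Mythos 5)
Your proof is correct and takes essentially the same route as the paper: both arguments rest on the rigidity that a convex combination of points in the closed unit disc can lie on the unit circle only if every term with positive weight equals that boundary point. The paper obtains this in one line by citing strict convexity of $B_{\mathbb{C}}$, whereas you verify the same rigidity by hand via the normalization $\langle Tx,x\rangle=1$ and the termwise chain $\Re z\le|z|\le1$; the substance is identical.
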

	\begin{proof}
		Let $T=\sum_{i=1}^nt_iA_i,$ where $t_i\in[0,1]$ and $\sum_{i=1}^nt_i=1.$ Then 
		\[\langle Tx,x\rangle=\sum_{i=1}^n t_i \langle A_ix,x\rangle.\]
		Since $x\in M_{w(T)},$ and $w(T)=1,$ we get $\langle Tx,x\rangle\in S_{\mathbb{C}}.$ Note that, $B_{\mathbb{C}}$	is strictly convex and $|\langle A_ix,x\rangle|\leq 1.$ Therefore, $\langle Tx,x\rangle=\langle A_ix,x\rangle$ for all $1\leq i\leq n.$ Thus, $x\in M_{w(A_i)},$ completing the proof.
	\end{proof}
	
	In the next theorem, we characterize unitary operators which are nu-extreme contractions on two-dimensional Hilbert space.
	
	\begin{theorem}\label{th-2dimuni}
		Let $\dim(H)=2.$ Suppose $T\in L(H)$	is a unitary. Then the following are true.\\
		\rm(a) If $T=\lambda I$ for a scalar $\lambda,$ then $T$ is a nu-extreme contraction.\\
		\rm(b) If $T\neq \lambda I$ for all scalars $\lambda,$ and $T^*= \frac{\mu}{\overline{\mu}}T$ for a scalar $\mu,$  then $T$ is not a nu-extreme contraction.\\
		\rm(c) If  $T\neq \lambda I$ for all scalars $\lambda,$ and $T^*\neq \frac{\mu}{\overline{\mu}}T$ for all scalars $\mu,$   then $T$ is a nu-extreme contraction.
	\end{theorem}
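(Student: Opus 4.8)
The plan is to use the two reductions recorded in the introduction: $w(\cdot)$ is weakly unitarily invariant and nu-extreme contractions are preserved under multiplication by unimodular scalars. Since a unitary on a two-dimensional space is unitarily diagonalizable, I would first reduce to the case $T=D=\mathrm{diag}(\lambda_1,\lambda_2)$ with $|\lambda_1|=|\lambda_2|=1$, noting $w(D)=1$. Writing $T=U^*DU$ and observing that $T^*=\frac{\mu}{\bar\mu}T$ is equivalent to $D^*=\frac{\mu}{\bar\mu}D$, a short computation with $\bar\lambda_j=\frac{\mu}{\bar\mu}\lambda_j$ shows the three hypotheses correspond exactly to: (a) $\lambda_1=\lambda_2$; (b) $\lambda_1=-\lambda_2$ (with $\lambda_1\neq\lambda_2$); and (c) $\lambda_1\neq\pm\lambda_2$. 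Throughout I would use the preliminary remark that if $D=tA+(1-t)B$ with $t\in(0,1)$ and $w(A),w(B)\le1$, then the triangle inequality $1=w(D)\le t\,w(A)+(1-t)\,w(B)\le1$ forces $w(A)=w(B)=1$, so that Lemma \ref{lem-gen} applies.

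For (a) it suffices, by invariance under unimodular scalars, to treat $D=I$. Here $M_{w(I)}=S_H$, so Lemma \ref{lem-gen} gives $\langle Ax,x\rangle=\langle Ix,x\rangle$ for every unit vector $x$; hence the numerical range of $A-I$ is $\{0\}$, which by polarization forces $A=I$, and likewise $B=I$. Thus $I$, and therefore $\lambda I$, is a nu-extreme contraction. For (b), when $\lambda_1=-\lambda_2$ a suitable unimodular multiple of $D$ equals $\mathrm{diag}(1,-1)$, which is self-adjoint, has numerical radius $1$, and is not $\pm I$; by Lemma \ref{lem-2dimself} it is not a nu-extreme contraction, and since unimodular scalar multiplication preserves nu-extreme contractions, neither are $D$ and $T$.

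The substance is (c), where I would prove the following key claim: if $\lambda_1\neq\pm\lambda_2$ and $A=\begin{bmatrix}\lambda_1 & p\\ q & \lambda_2\end{bmatrix}$ satisfies $w(A)\le1$, then $p=q=0$, i.e.\ $A=D$. Granting this, the theorem follows immediately: computing $\langle Dx,x\rangle=\lambda_1|x_1|^2+\lambda_2|x_2|^2$ (a convex combination of the distinct boundary points $\lambda_1,\lambda_2$) shows $M_{w(D)}$ consists precisely of the unimodular scalar multiples of the coordinate vectors $e_1=(1,0)$ and $e_2=(0,1)$, so in any decomposition $D=tA+(1-t)B$ Lemma \ref{lem-gen} pins the diagonals of $A$ and $B$ to $(\lambda_1,\lambda_2)$; since $w(A)=w(B)=1$, the claim yields $A=B=D$. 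To prove the claim I would use the standard identity $w(A)=\max_{\theta}\lambda_{\max}\big(\Re(e^{i\theta}A)\big)$. Evaluating at $\theta_j=-\arg\lambda_j$ makes the $(j,j)$ entry of the Hermitian matrix $\Re(e^{i\theta_j}A)$ equal to $1$, so $\lambda_{\max}\ge1$; combined with $w(A)\le1$ giving $\lambda_{\max}\le1$, the $2\times2$ eigenvalue formula $\frac{1+m}{2}\pm\sqrt{(\frac{1-m}{2})^2+|g|^2}=1$ (with $m\le1$) collapses to $g=0$, forcing the relevant off-diagonal entry of $\Re(e^{i\theta_j}A)$ to vanish. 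This gives the homogeneous system $e^{-i\arg\lambda_1}p+e^{i\arg\lambda_1}\bar q=0$ and $e^{-i\arg\lambda_2}p+e^{i\arg\lambda_2}\bar q=0$ in $p,\bar q$, whose determinant is $2i\sin(\arg\lambda_2-\arg\lambda_1)$ and vanishes precisely when $\lambda_1=\pm\lambda_2$; so under $\lambda_1\neq\pm\lambda_2$ it is nonsingular and $p=q=0$.

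The main obstacle is exactly this claim: showing that every nonzero off-diagonal perturbation of $D$ strictly raises the numerical radius above $1$ when $\lambda_1\neq\pm\lambda_2$. Geometrically it says that the numerical range of $A$, an ellipse with foci at the eigenvalues of $A$, cannot remain inside the closed unit disk while passing through the two boundary points $\lambda_1,\lambda_2$ unless it degenerates to the segment joining them. The hypothesis $\lambda_1\neq\pm\lambda_2$ enters precisely as the nondegeneracy of the above linear system, and this is what separates case (c) from case (b): when $\lambda_1=-\lambda_2$ the determinant vanishes and genuine off-diagonal perturbations preserving $w=1$ appear, exactly as exhibited in the proof of Lemma \ref{lem-2dimself}.
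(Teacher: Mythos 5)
Your proposal is correct, and parts (a) and (b) coincide with the paper's own argument: (a) is exactly the Lemma \ref{lem-gen} argument (every unit vector lies in $M_{w(T)}$, so the diagonal pinning forces $T_1=T_2=T$), and (b) reduces, as in the paper, to the self-adjoint case settled by Lemma \ref{lem-2dimself} (the paper routes this through Theorem \ref{th-self}). The genuine difference is in part (c), where your skeleton matches the paper's --- diagonalize, use Lemma \ref{lem-gen} to fix the diagonal entries of $A$ and $B$, then show the off-diagonal entries vanish --- but the key step is executed by different means. The paper invokes \cite[Th.~2.6]{SMBP}, asserting that each $e_j\in M_{w(A)}$ is an eigenvector of $\langle\Re(A)e_j,e_j\rangle\Re(A)+\langle\Im(A)e_j,e_j\rangle\Im(A)$ with eigenvalue $1$, and then solves the resulting four real equations (\ref{eq-uni1})--(\ref{eq-uni3}) by a two-case analysis ($a_1a_2b_1b_2\neq 0$ versus $a_1a_2b_1b_2=0$, with further subcases and a multiplication-by-$i$ trick). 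You instead derive the constraints from the elementary identity $w(A)=\max_{\theta}\lambda_{\max}\big(\Re(e^{i\theta}A)\big)$ evaluated at $\theta_j=-\arg\lambda_j$: the $(j,j)$ entry of $\Re(e^{i\theta_j}A)$ equals $1$ while its largest eigenvalue is at most $w(A)\le 1$, so the $2\times 2$ eigenvalue formula forces the off-diagonal entry of $\Re(e^{i\theta_j}A)$ to vanish. These are secretly the same constraints, since $\langle\Re(A)e_j,e_j\rangle\Re(A)+\langle\Im(A)e_j,e_j\rangle\Im(A)=\Re(\overline{\lambda_j}A)=\Re(e^{i\theta_j}A)$; in effect you have reproved, in an elementary way, exactly the special case of the cited theorem that is needed. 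What your route buys: it is self-contained (no appeal to \cite{SMBP}), it keeps the unknowns complex so the two constraints form a single homogeneous system in $(p,\bar q)$ whose determinant $2i\sin(\arg\lambda_2-\arg\lambda_1)$ vanishes precisely when $\lambda_1=\pm\lambda_2$, and it thereby replaces the paper's casework with one determinant computation that makes transparent why hypothesis (c) is exactly the nondegeneracy condition separating it from case (b). You also supply explicitly a reduction the paper leaves tacit: in any decomposition $D=tA+(1-t)B$ with $w(A),w(B)\le 1$, the triangle inequality forces $w(A)=w(B)=1$, which is what licenses the application of Lemma \ref{lem-gen}.
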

	\begin{proof}
		(a)  Let $T=tT_1+(1-t)T_2,$ for some $t\in(0,1)$ and $T_1,T_2\in L(H)$ with $w(T_1)=w(T_2)=1.$ Note that $x\in M_{w(T)}$ for all $x\in S_H.$ Therefore, from Lemma \ref{lem-gen}, it follows that  $\langle Tx,x\rangle=\langle T_1x,x\rangle$ for all $x\in S_H.$ Hence $T=T_1,$ since $H$ is a complex Hilbert space. Similarly, $T=T_2.$ Therefore, $T$ is a nu-extreme contraction.\\
		
		(b) Note that, $T^*= \frac{\mu}{\overline{\mu}}T$ implies that $\frac{\mu}{|\mu|} T$ is self-adjoint. Since $ T\neq \lambda I$ for all scalars $\lambda,$ $\frac{\mu}{|\mu|} T\neq \pm I.$ Therefore, from Theorem \ref{th-self}, we get $\frac{\mu}{|\mu|} T$ is not a nu-extreme contraction, thus so is $T.$\\
		
		(c) Since $T$ is a unitary, $T=U^*DU$ for a unitary operator $U$ and a diagonal operator $D.$ 	Let $D=diag(a_1+ib_1,a_2+ib_2),$ where $a_1,a_2,b_1,b_2\in \mathbb{R}$ and $|a_1+ib_1|=|a_2+ib_2|=1.$ Suppose that $D=\frac{1}{2}A+\frac{1}{2}B$ for some $A,B\in L(H)$ such that   $w(A)=w(B)=1.$ Assume that $\{e_1,e_2\}$ is an orthonormal basis of $H$ such that $De_1=(a_1+ib_1)e_1$ and $De_2=(a_2+ib_2)e_2.$ Clearly, $e_1,e_2\in M_{w(D)}.$ Now, from Lemma \ref{lem-gen}, it follows that $e_1,e_2\in M_{w(A)}\cap M_{w(B)}$ and $\langle De_j,e_j\rangle=\langle Ae_j,e_j\rangle=\langle Be_j,e_j\rangle$ for $j=1,2.$ Thus, there exist $\alpha,\beta,\gamma,\delta\in \mathbb{R}$ such that 
		\[ A=\left[ \begin{array}{cc}
			a_1+ib_1 & \gamma+i\delta \\
			\alpha+i\beta & a_2+i b_2
		\end{array} \right],~
		B=\left[ \begin{array}{cc}
			a_1+ib_1 & -\gamma-i\delta \\
			-\alpha-i\beta & a_2+i b_2
		\end{array} \right].
		\]
		Now, from \cite[Th. 2.6]{SMBP}, it follows that for $j=1,2,$ $e_j$ is an eigenvector of $\langle \Re(A)e_j,e_j\rangle \Re(A)+\langle \Im(A)e_j,e_j\rangle \Im(A)$ corresponding to the eigenvalue $w^2(A)=1.$ Thus, 
		\begin{eqnarray*}
			\langle \Re(A)e_1,e_1\rangle \Re(A)e_1+\langle \Im(A)e_1,e_1\rangle \Im(A)e_1&=&e_1\\
			\Rightarrow \langle \Re(A)e_1,e_1\rangle \langle \Re(A)e_1,e_2\rangle+\langle \Im(A)e_1,e_1\rangle \langle \Im(A)e_1,e_2\rangle&=&0\\
			\Rightarrow a_1\{(\alpha+\gamma)+i(\beta-\delta)\}+b_1\{(\beta+\delta)-i(\alpha-\gamma)\}&=&0
		\end{eqnarray*}
		\begin{eqnarray}\label{eq-uni1}
			\Rightarrow a_1(\alpha+\gamma)+b_1(\beta+\delta)=0=a_1(\beta-\delta)-b_1(\alpha-\gamma).	
		\end{eqnarray}
		Similarly, from $\langle \Re(A)e_2,e_2\rangle \Re(A)e_2+\langle \Im(A)e_2,e_2\rangle \Im(A)e_2=e_2,$ we get
		\begin{eqnarray}\label{eq-uni2}
			a_2(\alpha+\gamma)+b_2(\beta+\delta)=0=a_2(\beta-\delta)-b_2(\alpha-\gamma).	
		\end{eqnarray}
		Solving (\ref{eq-uni1}) and (\ref{eq-uni2}), we get
		\begin{eqnarray}\label{eq-uni3}
			(\beta+\delta)(a_1b_2-a_2b_1)=0.
		\end{eqnarray}
		Now, we consider two cases separately.\\
		
		Case (I). Let $a_1a_2b_1b_2\neq 0.$ 
		Note that, $w(A)=1$ gives $a_1^2+b_1^2=1=a_2^2+b_2^2.$ Now, from (\ref{eq-uni3}), if $a_1b_2-a_2b_1=0$ holds, then it is easy to check that  $a_1=\pm a_2,b_1=\pm b_2,$ i.e., either $a_1+ib_1=a_2+ib_2$ or $a_1+ib_1=-(a_2+ib_2).$ If $a_1+ib_1=-(a_2+ib_2),$ then 
		$$T^*=U^*D^*U=\frac{a_1-ib_1}{a_1+ib_1}U^*DU=\frac{a_1-ib_1}{a_1+ib_1}T,$$
		which contradicts the hypothesis. Therefore, we must have $a_1+ib_1=a_2+ib_2.$ Thus 
		$$T=U^*DU=(a_1+ib_1)I$$ and hence $T$ is a nu-extreme contraction.\\
		If $a_1b_2-a_2b_1\neq 0,$ then from (\ref{eq-uni3}), we get $\beta+\delta=0.$ Therefore, from (\ref{eq-uni1}) and (\ref{eq-uni2}), we have the following equations.
		\[\alpha+\gamma=0,~a_1\delta-b_1\gamma=0~\text{and } a_2\delta-b_2\gamma=0.\]
		Solving the above equations, we get $\alpha=\beta=\gamma=\delta=0.$ Thus $D=A=B,$ which proves that $D$ is a nu-extreme contraction.  Therefore, $T$ is also a nu-extreme contraction.\\
		
		Case (II). Let $a_1a_2b_1b_2=0.$ First assume that $a_1=0.$ Clearly, $b_1\neq 0,$ since $e_1\in M_{w(D)}.$ Note that, if $a_2=0,$ then $b_2\neq 0$ and $D=diag(ib_1,ib_2),$ which shows that $D^*=-D.$ Thus, $T^*=-T,$ contradicting the hypothesis. Therefore, if $a_1=0,$ then $b_1\neq 0$ and $a_2\neq 0.$ Now, solving (\ref{eq-uni1}), (\ref{eq-uni2}) and (\ref{eq-uni3}), we get $\alpha=\beta=\gamma=\delta=0.$ Therefore, $A=D=B.$ Thus, $D$ is a nu-extreme contraction, which eventually proves that $T$ is a nu-extreme contraction.\\
		Now, if $b_1=0,$ then using the previous argument, we can say that $iD$ is a nu-extreme contraction. Therefore, $iT$ is a nu-extreme contraction, and so is $T.$ Similarly, either $a_2=0$ or $b_2=0$ gives that $T$ is a nu-extreme contraction, completing the proof of the theorem.
	\end{proof}

	Combining Theorem \ref{th-notuni} 	and Theorem \ref{th-2dimuni}, we get the description of normaloid  (equivalently, normal) nu-extreme contractions on two-dimensional complex Hilbert spaces, which we state in the following theorem.
	\begin{theorem}\label{th-2com}
		Suppose $\dim(H)=2.$  Let $T\in L(H)$ be such that $w(T)=\|T\|=1.$  Then $T$ is a nu-extreme contraction if only if either of the following holds.\\
		\rm(i) $T=\lambda I$ for a scalar $\lambda.$\\
		\rm(ii) $T$ is a unitary and $T^*\neq \frac{\mu}{\overline{\mu}}T$ for all scalars $\mu.$
	\end{theorem}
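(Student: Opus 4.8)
The plan is to prove the stated equivalence by directly assembling Theorem \ref{th-notuni} and Theorem \ref{th-2dimuni}, since all of the analytic work has already been carried out in those results. Throughout I would keep in mind that the hypothesis $w(T)=\|T\|=1$ just says that $T$ is normaloid, and that on a two-dimensional space the normaloid operators coincide with the normal operators (as noted in the Introduction, citing \cite[Th. 6.5-1, p. 164]{GR}); this is what justifies calling (i)--(ii) a description of the \emph{normal} nu-extreme contractions, even though the argument itself only invokes the normaloid condition.

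For the necessity direction, I would assume that $T$ is a nu-extreme contraction and first apply Theorem \ref{th-notuni}. Since $\dim(H)=2<\infty$ and $w(T)=\|T\|=1$, this immediately forces $T$ to be a unitary, which is exactly the standing hypothesis under which Theorem \ref{th-2dimuni} operates. I would then split into two cases according to whether $T$ is a scalar multiple of the identity. If $T=\lambda I$ for some scalar $\lambda$, then (i) holds and there is nothing further to check. Otherwise $T\neq\lambda I$ for every scalar $\lambda$, and I would invoke the contrapositive of Theorem \ref{th-2dimuni}(b): were $T^*=\frac{\mu}{\overline{\mu}}T$ to hold for some scalar $\mu$, that part would assert that $T$ is not a nu-extreme contraction, contradicting our assumption. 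Hence $T^*\neq\frac{\mu}{\overline{\mu}}T$ for all scalars $\mu$, which is precisely condition (ii).

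For the sufficiency direction, I would treat the two conditions separately. If (i) holds, then $T=\lambda I$, and the constraint $\|T\|=1$ forces $|\lambda|=1$, so $T$ is a unitary of the form $\lambda I$; Theorem \ref{th-2dimuni}(a) then yields that $T$ is a nu-extreme contraction. If instead (ii) holds, then $T$ is a unitary with $T\neq\lambda I$ for all $\lambda$ and $T^*\neq\frac{\mu}{\overline{\mu}}T$ for all $\mu$, so Theorem \ref{th-2dimuni}(c) gives directly that $T$ is a nu-extreme contraction. This closes the equivalence.

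Since each step is a straightforward citation of an earlier result, I do not anticipate a genuine obstacle. The only point requiring mild care is the bookkeeping of the three mutually exclusive cases of Theorem \ref{th-2dimuni} --- scalar, self-adjoint-up-to-phase, and the generic unitary --- and verifying that they match exactly the two conditions (i) and (ii) in both directions, so that no unitary is left unclassified and no overlap is double-counted.
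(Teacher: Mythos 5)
Your proof is correct and takes exactly the paper's approach: the paper offers no separate argument for this theorem, stating it precisely as the combination of Theorem \ref{th-notuni} (to force $T$ unitary) with the trichotomy of Theorem \ref{th-2dimuni}, which is what you assemble. The one bookkeeping point you allude to but should write out is that condition (ii) automatically excludes $T=\lambda I$ --- since $T=\lambda I$ with $|\lambda|=1$ satisfies $T^*=\frac{\mu}{\overline{\mu}}T$ with $\mu=\overline{\lambda}$ --- so that Theorem \ref{th-2dimuni}(c) is indeed applicable in your sufficiency step for (ii).
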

	Now, we generalize Theorem \ref{th-2com}. We characterize those normal operators which are nu-extreme contractions on finite-dimensional Hilbert space.
	\begin{theorem}\label{th-ncom}
		Suppose $\dim(H)=n.$ Let $T\in L(H)$ be a normal operator with $w(T)=1.$ Suppose $d_1,d_2,\ldots,d_n$ are all the eigenvalues of $T.$ Then $T$ is a nu-extreme contraction if and only if for each $1\leq j<k\leq n,$ the diagonal operator $diag(d_j,d_k)$ is a nu-extreme contraction.\\
		More generally, the necessary part holds if $T$ is a normaloid operator.
	\end{theorem}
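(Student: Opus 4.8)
The plan is to prove both directions of the biconditional, establishing necessity first (which also covers the more general normaloid case) and then sufficiency for normal operators. For necessity, suppose $T$ is a nu-extreme contraction and fix indices $1\leq j<k\leq n$. Since $T$ is normaloid, we can write $T=U^*DU$ where $U$ is unitary and $D=diag(d_1,\ldots,d_n)$, and by weak unitary invariance it suffices to show each $diag(d_j,d_k)$ is a nu-extreme contraction. The key observation is that $D$ decomposes as a direct sum containing the $2\times 2$ block $diag(d_j,d_k)$ together with the complementary diagonal block. If some $diag(d_j,d_k)$ were \emph{not} a nu-extreme contraction, then since $w(diag(d_j,d_k))\leq 1$, I would invoke Theorem \ref{th-diag} (after arranging the block structure by permuting basis vectors, which is a unitary conjugation) to conclude that $\frac{1}{w(D)}D=D$ is not a nu-extreme contraction, hence $T$ is not either — a contradiction. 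The subtlety here is that Theorem \ref{th-diag} requires the relevant block to have numerical radius exactly $1$; if $w(diag(d_j,d_k))<1$ I would need to handle this separately, but in that case $diag(d_j,d_k)$ is automatically not a nu-extreme contraction (its numerical radius is not $1$), and I should check whether the statement implicitly normalizes or whether such pairs force a direct contradiction via Theorem \ref{th-diag} applied to a larger block of radius one.

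For the converse (sufficiency), assume every $diag(d_j,d_k)$ is a nu-extreme contraction; I want to show $T$, equivalently $D$, is a nu-extreme contraction. Suppose $D=tA+(1-t)B$ with $t\in(0,1)$ and $w(A)=w(B)=1$. Let $\{e_1,\ldots,e_n\}$ be the orthonormal eigenbasis with $De_j=d_je_j$. Since $w(D)=1$ forces $|d_j|=1$ for those eigenvalues achieving the radius, each such $e_j$ lies in $M_{w(D)}$, and by Lemma \ref{lem-gen} we get $e_j\in M_{w(A)}\cap M_{w(B)}$ with $\langle Ae_j,e_j\rangle=\langle Be_j,e_j\rangle=d_j$. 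The strategy is to show $A=B=D$ entrywise by examining the $2\times 2$ compressions. For each pair $(j,k)$, consider the compressions $A_{jk},B_{jk}$ of $A,B$ to $\mathrm{span}\{e_j,e_k\}$; these have the same diagonal as $diag(d_j,d_k)$, and one shows $diag(d_j,d_k)=tA_{jk}+(1-t)B_{jk}$ with each compression having numerical radius at most $1$. Because compression does not increase numerical radius, and because $e_j,e_k\in M_{w(A)}$ forces the compression to also attain numerical radius $1$, the nu-extremality of $diag(d_j,d_k)$ yields $A_{jk}=B_{jk}=diag(d_j,d_k)$.

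The main obstacle, and the step I expect to require the most care, is the passage from all the $2\times 2$ compressions agreeing to the full operators $A$ and $B$ agreeing. The off-diagonal entries $\langle Ae_j,e_k\rangle$ are exactly the off-diagonal entries of the compression $A_{jk}$, so if every $A_{jk}=diag(d_j,d_k)$ then every off-diagonal entry $\langle Ae_j,e_k\rangle$ vanishes and every diagonal entry equals $d_j$, forcing $A=D$ and likewise $B=D$. Thus the crux is verifying that $w(A_{jk})\leq 1$ genuinely holds (which follows since compressing to a subspace can only decrease the numerical radius), and more delicately, that $A_{jk}$ actually attains numerical radius $1$ so that the nu-extremality hypothesis on $diag(d_j,d_k)$ applies — this is where I would lean on Lemma \ref{lem-gen}, which guarantees $\langle A e_j, e_j\rangle = d_j$ with $|d_j|=1$, so the compression $A_{jk}$ has a diagonal entry of modulus one and hence $w(A_{jk})\geq 1$, pinning it to exactly $1$. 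I would also need to confirm that the convex decomposition descends to compressions, i.e.\ that compression is linear, which is immediate, and that the relation $diag(d_j,d_k)=tA_{jk}+(1-t)B_{jk}$ holds as an identity of $2\times 2$ matrices, completing the reduction to the already-assumed $2\times 2$ case.
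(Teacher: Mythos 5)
Your sufficiency argument is essentially the paper's own: compress the convex decomposition $D=tA+(1-t)B$ to each coordinate plane $\mathrm{span}\{e_j,e_k\}$, use that compression does not increase numerical radius (the paper cites \cite[Th. 3.3]{MPS} for exactly this), invoke nu-extremality of $diag(d_j,d_k)$ to force both compressions to equal $diag(d_j,d_k)$, and read off that all off-diagonal entries of $A$ and $B$ vanish. One remark: the step you single out as most delicate --- showing that the compressions actually \emph{attain} numerical radius $1$ --- is not needed, because the paper's definition of nu-extreme contraction only requires the two operators in the decomposition to satisfy $w(\cdot)\leq 1$, not $w(\cdot)=1$. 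Your justification of that step is also slightly off: for a pair with $|d_k|<1$ you cannot place $e_k$ in $M_{w(D)}$, so Lemma \ref{lem-gen} does not give $e_k\in M_{w(A)}$; fortunately the whole step can simply be dropped.

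The genuine gap is in the necessity direction, and it is exactly the issue you flag but leave unresolved. The missing idea is the first line of the paper's proof: apply Theorem \ref{th-notuni} to conclude that $T$, being a normaloid nu-extreme contraction with $w(T)=\|T\|=1$ on a finite-dimensional space, is \emph{unitary}. This single step repairs both defects in your outline. First, your claim that ``since $T$ is normaloid, we can write $T=U^*DU$ with $D$ diagonal'' is false for normaloid operators in general (normaloid only means $w(T)=\|T\|$; in dimension $\geq 3$ there are non-normal normaloid operators), so the ``more generally normaloid'' clause is not covered by your argument --- it only becomes legitimate after unitarity, hence normality, is established. Second, and more importantly, both your application of Theorem \ref{th-diag} and the very conclusion ``$diag(d_j,d_k)$ is a nu-extreme contraction'' require $w\bigl(diag(d_j,d_k)\bigr)=1$, which you never establish; you only observe that if it fails the block is automatically not nu-extreme and say you ``should check'' what happens. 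Unitarity of $T$ gives $|d_i|=1$ for every eigenvalue, so every $2\times 2$ block has numerical radius exactly $1$ and Theorem \ref{th-diag} applies in contrapositive form, as in the paper. Note that even your proposed fallback (deriving a contradiction from a ``larger block of radius one'') would itself need Theorem \ref{th-notuni} applied to a $2\times 2$ normal, non-unitary block, so there is no route around this ingredient; as written, the necessity half of your proof is incomplete.
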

	\begin{proof}
		First suppose that $T$ is a nu-extreme contraction. Choose $1\leq j<k\leq n.$ Suppose $A=diag(d_j,d_k).$ From Theorem \ref{th-notuni}, it follows that $T$ is a unitary. Therefore, there exists a unitary operator $U$ such that 
		\[ U^*TU=\left[ \begin{array}{cc}
			A& O \\
			O & B
		\end{array} \right], 
		\]
		where $B=diag(d_1,\ldots,d_{j-1},d_{j+1},\ldots,d_{k-1},d_{k+1},\ldots,d_n),$ i.e., $B$ is a diagonal operator whose diagonal entries are the eigenvalues of $T$ except $d_j,d_k.$ Clearly, $w(A)=1.$
		Since $T$ is a nu-extreme contraction, $U^*TU$ is also a nu-extreme contraction. Therefore, from Theorem \ref{th-diag} it follows that $A$ is a nu-extreme contraction.\\
		
		Conversely, suppose that $diag(d_j,d_k)$ is a nu-extreme contraction for all $1\leq j<k\leq n.$ Since $T$ is a normal operator, there exists a unitary operator $V$ such that $V^*TV=diag(d_1,\ldots,d_n).$ Let $T=tT_1+(1-t)T_2,$ for some $t\in(0,1)$ and $T_1,T_2\in L(H),$ where $w(T_1)\leq 1$ and $w(T_2)\leq 1.$ Then
		$$diag(d_1,\ldots,d_n)=V^*TV=tV^*T_1V+(1-t)V^*T_2V.$$ Therefore, for all $1\leq j<k\leq n,$ $diag(d_j,d_k)=tP_{jk}+(1-t)Q_{jk},$ where 
		\[ P_{jk}=\left[ \begin{array}{cc}
			(V^*T_1V)_{jj}& (V^*T_1V)_{jk} \\
			(V^*T_1V)_{kj} & (V^*T_1V)_{kk}
		\end{array} \right], ~
		Q_{jk}=\left[ \begin{array}{cc}
			(V^*T_2V)_{jj}& (V^*T_2V)_{jk} \\
			(V^*T_2V)_{kj} & (V^*T_2V)_{kk}
		\end{array} \right].
		\]
		Now, from \cite[Th. 3.3]{MPS}, it follows that $$w(P_{jk})\leq w(V^*T_1V)=w(T_1)\leq 1~\text{and }w(Q_{jk})\leq w(V^*T_2V)=w(T_2)\leq 1.$$ Since $diag(d_j,d_k)$ is a nu-extreme contraction, we must have $P_{jk}=Q_{jk}=diag(d_j,d_k).$ Thus, $V^*T_1V=V^*T_2V=V^*TV,$ i.e., $T_1=T_2=T.$ This proves that $T$ is a nu-extreme contraction.\\
		Note that, in the necessary part, more generally, if we assume $T$ is a normaloid operator, then the fact that $T$ is a nu-extreme contraction and Theorem \ref{th-notuni} ensure that $T$ is a unitary. Thus, in this case, $T$ becomes a normal operator. Therefore, the necessary part also holds for normaloid operator.
	\end{proof}
 Now, we are ready to exhibit an example to show that the necessary condition of Theorem \ref{th-diag} for an operator to be a nu-extreme contraction is not sufficient.
 \begin{example}
 	Suppose $I$ is the identity operator on $\mathbb{C}^n$ and \[ T=\left[ \begin{array}{cc}
 		I&O \\
 	O&-I 
 	\end{array} \right].\] By Theorem \ref{th-ncom}, $I,-I$ are nu-extreme contractions. However, from Lemma \ref{lem-2dimself}, it follows that $diag(1,-1)$ is not a nu-extreme contraction. Therefore, by Theorem \ref{th-ncom}, $T$ is not a nu-extreme contraction, since $1,-1$ are eigenvalues of $T.$  
 \end{example} 
	In the following theorem, we prove that a compact normal operator cannot be a nu-extreme contraction, if the corresponding space is infinite-dimensional.  
	\begin{theorem}\label{th-inf}
		Suppose $H$ is an infinite-dimensional complex Hilbert space. Assume that $T\in L(H)$ is a compact normal operator and  $w(T)=1.$ Then $T$ is not a nu-extreme contraction.	
	\end{theorem}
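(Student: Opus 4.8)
The plan is to reduce the problem to a single $2\times 2$ diagonal block and then invoke Theorems \ref{th-notuni} and \ref{th-diag}. Since $T$ is normal it is normaloid, so $w(T)=\|T\|=1$, and by the spectral theorem for compact normal operators $T$ is unitarily equivalent to a diagonal operator $D=diag(d_1,d_2,\ldots)$ whose diagonal entries are the eigenvalues of $T$. Because nu-extremity is preserved under the equivalence $T\mapsto U^*TU$ (as noted in the introduction), it suffices to prove that $D$ is not a nu-extreme contraction.

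The key structural input is compactness: the nonzero eigenvalues of a compact operator can accumulate only at $0$, so they form a finite or null sequence tending to $0$. First I would record two facts about the moduli $|d_j|$. On one hand, since $w(D)=\|D\|=\sup_j|d_j|=1$ and only finitely many eigenvalues can have modulus bounded away from $0$, this supremum is attained, so there is an eigenvalue $d_p$ with $|d_p|=1$. On the other hand, there is an eigenvalue $d_q$ with $|d_q|<1$: if $T$ has infinitely many nonzero eigenvalues, they converge to $0$, so all but finitely many satisfy $|d_q|<1$; and if $T$ has only finitely many nonzero eigenvalues, its range is finite-dimensional, forcing an infinite-dimensional kernel, so $0$ is an eigenvalue and one may take $d_q=0$.

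With such a pair isolated, I would group the two corresponding diagonal entries and write
\[
D=\left[\begin{array}{cc} A & O\\ O & B\end{array}\right],\qquad A=diag(d_p,d_q),
\]
where $B$ is the diagonal operator carrying the remaining eigenvalues. Then $w(A)=\|A\|=\max\{|d_p|,|d_q|\}=1$, but $A$ is not a unitary since $|d_q|<1$; hence Theorem \ref{th-notuni}, applied on the two-dimensional space spanned by the eigenvectors for $d_p$ and $d_q$, shows that $A$ is not a nu-extreme contraction. Applying Theorem \ref{th-diag}(i) to this block decomposition gives that $\tfrac{1}{w(D)}D=D$ (as $w(D)=1$) is not a nu-extreme contraction, and therefore neither is $T$.

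I expect the only genuine obstacle to be the spectral bookkeeping in the second step: justifying cleanly that compactness on an infinite-dimensional space simultaneously forces an eigenvalue of modulus exactly $1$ (attainment of the norm, from the clustering of eigenvalues at $0$) and an eigenvalue of modulus strictly less than $1$ (either through accumulation at $0$ or through an infinite-dimensional kernel). Once these two eigenvalues are produced, the passage to Theorems \ref{th-notuni} and \ref{th-diag} is immediate.
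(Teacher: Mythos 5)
Your proposal is correct and takes essentially the same route as the paper: produce an eigenvalue of modulus $1$ and an eigenvalue of modulus strictly less than $1$ (possibly $0$), isolate them in a $2\times 2$ diagonal block, and conclude via Theorem \ref{th-notuni} applied to that block together with Theorem \ref{th-diag}. The only difference is in the spectral bookkeeping — you split into the cases of infinitely versus finitely many nonzero eigenvalues (using the finite-dimensional range of a normal compact operator in the latter case to force $0$ into the point spectrum), while the paper argues that if every nonzero eigenvalue had modulus $1$, the countably many eigenvectors could not exhaust $H$, so some unit vector orthogonal to all eigenspaces is annihilated by $T$; both arguments yield the same dichotomy, and yours is arguably the cleaner of the two.
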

	\begin{proof}
		First we use the spectral theorem \cite[Th. 7.6]{Con} for compact normal operators. Suppose $\{\lambda_1,\lambda_2,\ldots\}$ is the set of all distinct non-zero eigenvalues of $T$ and $P_n$ is the projection of $H$ on $\ker(T-\lambda_n I).$ Then by \cite[Th. 7.6, p. 55]{Con}, $P_mP_n=P_nP_m=0$ if $m\neq n$ and 
		\[T=\sum_{n=1}^\infty \lambda_n P_n.\] 
		Now, using \cite[Cor. 7.8 (c), p. 56]{Con}, we get $\|T\|=\sup\{|\lambda_n|:n\geq 1\},$ where either $\lambda_n\to 0$ or  $\{\lambda_n\}$ is finite. Clearly,  $|\lambda_n|=\|T\|=w(T)=1$ for some $n\in \mathbb{N}.$ Without loss of generality, we may assume that $|\lambda_1|=1.$ \\
		We now claim that either $|\lambda_m|<1$ for some $m$ or $0$ is an eigenvalue of $T.$ If possible, let $|\lambda_m|=1$ for all $m>1.$   Note that, $\ker(T-\lambda_n I)$ is finite-dimensional for each $n\in \mathbb{N},$ since $T$ is compact. Consider an orthonormal basis $B_n$ of $\ker(T-\lambda_n I).$ 
		%If possible, suppose that $\cup_n B_n$ is a basis of $H.$ Consider an arbitrary $x\in H.$ Then $x=\sum_{i=1}^\infty x_i,$ where $x_i\in \ker(T-\lambda_i I)$ and all but finitely many $x_i$ are zero. Hence, 
		%$$TT^*x=T^*Tx=\sum_{i=1}^\infty T^*Tx_i=\sum_{i=1}^\infty \lambda_i T^*x_i=\sum_{i=1}^\infty |\lambda_i|^2x_i=\sum_{i=1}^\infty x_i=x.$$
		%This proves that $T$ is a unitary, a contradiction, since $T$ is compact and $H$ is infinite-dimensional.
		 Since $\cup_nB_n$ is countable, $\cup_nB_n$  cannot be a Hamel basis of $H.$ We extend $\cup_nB_n$ to an orthonormal Hamel basis, say $B$ of $H.$ Choose  an element $e\in B\setminus \cup_n B_n.$ Then for all $n\in \mathbb{N},$ $e\in \ker(T-\lambda_n I)^\perp$  and thus
		\[Te=\sum_{n=1}^\infty \lambda_n P_n e=0.\]
		Hence, if $|\lambda_m|=1$ for all $m,$ then $0$ must be an eigenvalue of $T.$ This establishes the claim.\\
		Now, we can consider that $T$ has an eigenvalue $d,$ where $|d|<1.$ Suppose that $e_1\in S_H\cap \ker(T-\lambda_1 I)$ and consider an eigenvector $e_2\in S_H$ of $T$ corresponding to the eigenvalue $d.$ Let $X=span\{e_1,e_2\}.$ Then $T(X)\subseteq X.$ Assume $T|_X=A.$ Then $A$ is a normal operator. Moreover, $w(A)=1$ and $A$ is not a unitary. Now, from Theorem \ref{th-notuni}, it follows that $A$ is not a nu-extreme contraction. Let $T(X^\perp)=B(X)+C(X^\perp).$ Then 
		\[ T=\left[ \begin{array}{cc}
			A & B \\
			O & C
		\end{array} \right] \in L(X\oplus X^\perp).
		\]
		Since $TT^*=T^*T,$ we get $BB^*=O.$ Thus, $B=O.$  Now, by Theorem \ref{th-diag}, we get $T$ is not a nu-extreme contraction, since $A$ is not a nu-extreme contraction. This ends the proof of the theorem.
	\end{proof}
	
	So far, we have mostly explored nu-extreme contractions which are normal operators. Now, we concentrate on operators which are not normal. 
	To prove the next result, we state a theorem from \cite{PB}.
	\begin{theorem}\cite[Th. 2.3]{PB}\label{th-nurad}
		Let $\dim(H)<\infty.$ Suppose $A\in L(H).$   Consider the operator
		\[ T=\left[ \begin{array}{cc}
			\lambda_1I & A \\
			O & \lambda_2 I
		\end{array} \right], ~\text{where } \lambda_1,\lambda_2\in \mathbb{C}.
		\]
		Then the numerical radius of $T$ is given by $w(T)=\sqrt{x_0^2+y_0^2},$ where
		\begin{eqnarray*}
			&& \theta=\arg(\frac{\lambda_1-\lambda_2}{2}),\\
			&& h=\frac{1}{2} \Re(\lambda_1+\lambda_2),~ k=\frac{1}{2} \Im(\lambda_1+\lambda_2),\\
			&& H=h\cos\theta+k\sin\theta, ~K=h\sin\theta-k\cos\theta,\\
			&& a=\frac{1}{2}\sqrt{|\lambda_1-\lambda_2|^2+\|A\|^2}, ~b=\frac{1}{2}\|A\|,\\
			&& \frac{1}{2}(a^2-b^2)\sin(2\phi_0)=bK\cos\phi_0-aH\sin\phi_0,\\
			&& x_0=h+a\cos\phi_0\cos\theta+b\sin\phi_0\sin\theta,\\
			&& y_0=k+a\cos\phi_0\sin\theta-b\sin\phi_0\cos\theta.
		\end{eqnarray*}
	\end{theorem}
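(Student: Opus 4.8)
The plan is to reduce the problem to the numerical radius of a single $2\times 2$ scalar matrix and then to apply the Elliptical Range Theorem together with an elementary one–variable optimization. Regarding $T$ as an operator on $H\oplus H$ and writing a unit vector as $z=(u,v)$ with $\|u\|^2+\|v\|^2=1$, a direct computation yields
\[
\langle Tz,z\rangle=\lambda_1\|u\|^2+\lambda_2\|v\|^2+\langle Av,u\rangle.
\]
Putting $r=\|u\|$, $s=\|v\|$ (so $r^2+s^2=1$) and $u=r\tilde u$, $v=s\tilde v$ with $\tilde u,\tilde v$ unit vectors, the cross term becomes $rs\,\langle A\tilde v,\tilde u\rangle$. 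First I would observe that, because $\dim(H)<\infty$, the quantity $|\langle A\tilde v,\tilde u\rangle|$ ranges over $[0,\|A\|]$ with the value $\|A\|$ attained, and that its argument can be prescribed arbitrarily by a unimodular rotation of $\tilde u$ without changing $r,s$. Hence, for fixed $r,s$ the modulus of $\langle Tz,z\rangle$ is maximized at $|\lambda_1 r^2+\lambda_2 s^2|+rs\|A\|$, so that
\[
w(T)=\max_{r^2+s^2=1,\ r,s\ge 0}\big(\,|\lambda_1 r^2+\lambda_2 s^2|+rs\|A\|\,\big),
\]
which is precisely the numerical radius of the scalar upper–triangular matrix $M$ with diagonal $\lambda_1,\lambda_2$ and $(1,2)$-entry $\|A\|$. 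In particular $w(T)$ depends on $A$ only through $\|A\|$.

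Next I would invoke the classical Elliptical Range Theorem: the numerical range $W(M)$ is the closed elliptical disk with foci $\lambda_1,\lambda_2$, center $\frac{\lambda_1+\lambda_2}{2}=h+ik$, and minor axis of length $\sqrt{\operatorname{tr}(M^*M)-|\lambda_1|^2-|\lambda_2|^2}=\|A\|$, using $\operatorname{tr}(M^*M)=|\lambda_1|^2+|\lambda_2|^2+\|A\|^2$. Consequently its major and minor semi–axes are $a=\frac12\sqrt{|\lambda_1-\lambda_2|^2+\|A\|^2}$ and $b=\frac12\|A\|$, and the major axis (the line through the foci) points in the direction $\theta=\arg(\lambda_1-\lambda_2)$. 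Parametrizing the boundary ellipse in the orthonormal frame $(\cos\theta,\sin\theta),\,(\sin\theta,-\cos\theta)$ reproduces exactly the stated point $(x_0,y_0)$, while the center $(h,k)$ has coordinates $(H,K)$ in this frame, so the squared distance of a boundary point from the origin equals $(H+a\cos\phi)^2+(K+b\sin\phi)^2$.

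Finally, since $w(T)$ is the maximum modulus over the convex disk $W(M)$ and the modulus is a convex function, the maximum is attained on the boundary ellipse; differentiating $(H+a\cos\phi)^2+(K+b\sin\phi)^2$ with respect to $\phi$ and equating to zero gives
\[
\tfrac12(a^2-b^2)\sin(2\phi_0)=bK\cos\phi_0-aH\sin\phi_0,
\]
and evaluating at the maximizing solution $\phi_0$ yields $w(T)=\sqrt{x_0^2+y_0^2}$. I expect the only genuinely content–bearing step to be the reduction in the first paragraph, where one must justify in finite dimensions that the magnitude $\|A\|$ and an arbitrary phase of the cross term are simultaneously attainable, thereby eliminating all information about $A$ except its norm; the remaining geometry is the standard elliptical range description and the rest is routine calculus, the main care being to match the orientation and frame conventions so as to recover the displayed formulas verbatim.
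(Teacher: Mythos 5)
Your proposal is correct. Note, however, that the paper itself offers no proof of this statement: it is imported verbatim as \cite[Th. 2.3]{PB}, so there is no internal argument to compare against. Your derivation is a complete and valid one: the key content-bearing step, the reduction $w(T)=w(M)$ with $M=\left[\begin{smallmatrix}\lambda_1 & \|A\|\\ 0 & \lambda_2\end{smallmatrix}\right]$ via the phase-rotation of $\tilde u$ and norm-attainment in finite dimensions, is sound, and the remainder (elliptical range theorem giving semi-axes $a,b$, foci $\lambda_1,\lambda_2$, major axis along $\theta$; passing to the rotated orthonormal frame in which the center has coordinates $(H,K)$; and the critical-point equation $\tfrac12(a^2-b^2)\sin(2\phi_0)=bK\cos\phi_0-aH\sin\phi_0$ from differentiating $(H+a\cos\phi)^2+(K+b\sin\phi)^2$) reproduces the stated formulas exactly, with the understood convention that $\phi_0$ denotes the maximizing solution of that equation.
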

	The following theorem gives us another large class of operators which are not nu-extreme contractions.
	\begin{theorem}
		Let $\dim(H)<\infty.$ Suppose $A\in L(H)$  is not an isometry. Let $\|A\|=1.$ Consider the operator
		\[ T=\left[ \begin{array}{cc}
			\lambda_1I & A \\
			O & \lambda_2 I
		\end{array} \right] \in L(H\oplus H),~\text{where } \lambda_1,\lambda_2\in \mathbb{C}.
		\]
		Then $\frac{1}{w(T)}T$ is not a nu-extreme contraction.
	\end{theorem}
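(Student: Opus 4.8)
The plan is to reduce the problem to an application of Theorem \ref{th-diag} by finding a suitable unitary conjugate of $T$ that is block-diagonal, with one diagonal block being a $2\times 2$ operator that fails to be a nu-extreme contraction. The key observation is that since $A$ is not an isometry with $\|A\|=1$, there is a unit vector $u$ with $\|Au\|<1$ but the norm $1$ is attained at some unit vector $v$ with $\|Av\|=1$. The strategy is to isolate a two-dimensional invariant structure coming from these vectors so that the restriction of $\frac{1}{w(T)}T$ to a suitable $2$-dimensional (or $4$-dimensional) subspace is itself not a nu-extreme contraction, and then invoke Theorem \ref{th-diag} to lift this to the whole operator.

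\medskip

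First I would use the singular value decomposition of $A$. Since $\dim(H)<\infty$, write $A=W\Sigma Z^*$ for unitaries $W,Z$ and $\Sigma=\mathrm{diag}(s_1,\ldots,s_m)$ with $1=s_1\geq s_2\geq\cdots\geq s_m\geq 0$; because $A$ is not an isometry, some $s_j<1$. Conjugating $T$ by the unitary $Z\oplus W^*$ (or an appropriate variant) replaces $A$ by $\Sigma$ while preserving the block upper-triangular form with scalar diagonal blocks $\lambda_1 I,\lambda_2 I$, and since $w(\cdot)$ is weakly unitarily invariant, it suffices to treat the case $A=\Sigma$. Now $T$ becomes unitarily equivalent (after a permutation of basis vectors pairing the $i$-th copy in each $H$) to a direct sum of $2\times 2$ blocks
\[
T_i=\begin{bmatrix} \lambda_1 & s_i \\ 0 & \lambda_2\end{bmatrix},
\]
so that $T\cong\bigoplus_{i=1}^m T_i$. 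By the formula in Theorem \ref{th-nurad}, $w(T)=\max_i w(T_i)$, and the maximum is attained on the block with $s_i=1$.

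\medskip

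The heart of the argument is then to show that $\frac{1}{w(T)}T_j$ is \emph{not} a nu-extreme contraction for an index $j$ with $s_j<1$ (equivalently, I exhibit a genuine convex decomposition in this block). Concretely, I would perturb $s_j$ to $s_j\pm\varepsilon$ for small $\varepsilon>0$, forming
\[
T_j^{\pm}=\begin{bmatrix}\lambda_1 & s_j\pm\varepsilon\\ 0 & \lambda_2\end{bmatrix},
\]
and use the continuity and the explicit monotone dependence of $w$ on $\|A\|=s_j$ in Theorem \ref{th-nurad} to guarantee that, for $\varepsilon$ small enough, both $w(T_j^+)$ and $w(T_j^-)$ are at most $w(T)$; here the strict inequality $s_j<1$ gives the slack needed to absorb the perturbation. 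Since $T_j=\tfrac12 T_j^+ + \tfrac12 T_j^-$ with $T_j^+\neq T_j^-$, the block $\frac{1}{w(T)}T_j$ is not a nu-extreme contraction. Finally, writing $T$ as a direct sum with this block as one summand, Theorem \ref{th-diag} immediately yields that $\frac{1}{w(T)}T$ is not a nu-extreme contraction.

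\medskip

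I expect the main obstacle to be the second step: verifying that the perturbed off-diagonal entry keeps the numerical radius of the perturbed block within $w(T)$. This requires tracking how $w(T_j)$ varies with $\|A\|=s_j$ through the system in Theorem \ref{th-nurad}, in particular confirming that $w(T_j)$ depends continuously and monotonically on $b=\tfrac12 s_j$ so that a small two-sided perturbation stays below the global maximum $w(T)$. Care is needed at the degenerate cases $\lambda_1=\lambda_2$ and where $s_j=0$, but the strict inequality $s_j<1$ (coming from $A$ not being an isometry) should provide the essential room, and one may alternatively perturb only entries of blocks that do not attain the maximal numerical radius to sidestep boundary complications.
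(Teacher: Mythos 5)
Your reduction is sound up to the last step: the SVD argument correctly turns $T$ (up to weak unitary invariance) into a direct sum $\bigoplus_{i=1}^m T_i$ of $2\times 2$ blocks with off-diagonal entries $s_i$, with $w(T)=\max_i w(T_i)$ attained at $s_1=1$, and your perturbation $s_j\mapsto s_j\pm\varepsilon$ of a block with $s_j<1$ does keep $w(T_j^{\pm})\le w(T)$ for small $\varepsilon$. (Monotonicity of $w$ in the off-diagonal modulus is awkward to extract from the implicit equations of Theorem \ref{th-nurad}; it follows more cleanly from convexity of $w$ and unitary invariance, since the block with entry $s'\in[0,s]$ is a convex combination of the two unitarily equivalent blocks with entries $\pm s$.) The genuine gap is the final appeal to Theorem \ref{th-diag}. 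That theorem requires a diagonal block whose numerical radius is \emph{exactly} $1$ and which is not a nu-extreme contraction. Your block supplies neither: the operator $\frac{1}{w(T)}T_j$ has numerical radius $w(T_j)/w(T)<1$ when $s_j<1$, so the assertion that it ``is not a nu-extreme contraction'' is vacuous (any operator of numerical radius less than $1$ trivially fails to be an extreme point of the unit ball) and it is not an admissible block for Theorem \ref{th-diag}. The natural repair, normalizing $T_j$ by its own radius, also fails: your decomposition $T_j=\frac12(T_j^++T_j^-)$ bounds the pieces only by $w(T)$, not by $w(T_j)$, and in fact $\frac{1}{w(T_j)}T_j$ can genuinely \emph{be} a nu-extreme contraction --- take $\lambda_1=1$, $\lambda_2=i$, $s_j=0$, so that $T_j=diag(1,i)$, which is nu-extreme by Theorem \ref{th-2dimuni}(c). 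So the route ``show the block is non-extreme, then cite Theorem \ref{th-diag}'' cannot be completed as stated.

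The fix is easy and you already have all the ingredients: do not use Theorem \ref{th-diag} as a black box, but repeat its two-line proof with your data. Put $P=T_j^{+}\oplus\bigoplus_{i\neq j}T_i$ and $Q=T_j^{-}\oplus\bigoplus_{i\neq j}T_i$; then $w(P)\le w(T)$ and $w(Q)\le w(T)$ by the direct-sum formula together with your perturbation bounds, while $T=\frac12(P+Q)$ and $P\neq T\neq Q$, whence $\frac{1}{w(T)}T$ is not nu-extreme. (Alternatively, group the block attaining $w(T)$ together with $T_j$ into a single summand of radius $w(T)$; then Theorem \ref{th-diag} applies legitimately after dividing by $w(T)$.) It is also worth noting that the paper's proof bypasses all of this bookkeeping: since $A$ is not an isometry, Kadison's theorem gives $A=\frac12 B+\frac12 C$ with $\|B\|=\|C\|=1$ and $B\neq A\neq C$, and Theorem \ref{th-nurad} shows that $w$ of such a block operator depends on the corner entry only through its norm, so the corresponding $P$ and $Q$ satisfy $w(P)=w(Q)=w(T)$ at once. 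Your perturbation of $\Sigma$ in the $(j,j)$ entry is precisely such a norm-preserving convex decomposition, so the SVD, the splitting into $2\times2$ blocks, and the monotonicity analysis can all be skipped.
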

	\begin{proof}
		From  \cite{K}, it follows that $A$ is not an extreme contraction (with respect to the operator norm). Therefore, there exist operators $B,C\in L(H)$ such that $\|B\|=\|C\|=1,$ $B\neq A\neq C$ and $A=\frac{1}{2}B+\frac{1}{2}C.$  Consider  
		\[ P=\left[ \begin{array}{cc}
			\lambda_1I & B \\
			O & \lambda_2 I
		\end{array} \right],
		Q=\left[ \begin{array}{cc}
			\lambda_1I & C \\
			O & \lambda_2 I
		\end{array} \right].
		\]
		Clearly, $T=\frac{1}{2}P+\frac{1}{2}Q,$ where $P\neq T\neq Q.$ 
		From Theorem \ref{th-nurad}, it is obvious that $w(T)=w(P)=w(Q).$ Therefore, $\frac{1}{w(T)}T$ is not a nu-extreme contraction. This completes the proof.
	\end{proof}
	
	In Theorem \ref{th-2com}, we have characterized nu-extreme contractions which are normaloid operators on two-dimensional Hilbert space. In the next two theorems, we characterize nu-extreme contractions which are not normaloid operators. In the following theorem, we cover the case of those not normaloid operators $T$ for which $M_{w(T)}$ contains an orthonormal basis. 
	\begin{theorem}\label{th-2-2}
		Let $\dim(H)=2.$ Suppose $T\in L(H)$ is an operator such that $1=w(T)<\|T\|.$ Assume that $M_{w(T)}$ contains an orthonormal basis, say $\{x,y\}.$ Then $T$ is a nu-extreme contraction if and only if $|\langle Tx,y\rangle |=1.$	
	\end{theorem}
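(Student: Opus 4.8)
The plan is to use the hypotheses to force $T$ into a rigid normal form, after which nu-extremeness can essentially be read off.

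First I would work in the orthonormal basis $\{x,y\}\subseteq M_{w(T)}$ and write $T$ as a $2\times 2$ matrix in this basis. Since $x,y\in M_{w(T)}$ and $w(T)=1$, both diagonal entries $\langle Tx,x\rangle,\langle Ty,y\rangle$ are unimodular. Multiplying $T$ by a suitable unimodular scalar, which preserves nu-extremeness, $w$, $\|\cdot\|$, the set $M_{w(T)}$, and the number $|\langle Tx,y\rangle|$, I may assume $\langle Tx,x\rangle=1$ and set $\langle Ty,y\rangle=e^{i\gamma}$, $\langle Tx,y\rangle=p$, $\langle Ty,x\rangle=q$.

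The key structural step uses the support behaviour of the numerical range. From $w(T)=1$ one has $\langle\Re(T)u,u\rangle=\Re\langle Tu,u\rangle\le|\langle Tu,u\rangle|\le 1$ for every unit $u$, so $\Re(T)\preceq I$; since $\langle\Re(T)x,x\rangle=\Re\langle Tx,x\rangle=1$, the quadratic form attains its maximum value $1$ at $x$, whence $x$ is a top eigenvector, $\Re(T)x=x$. Reading the vanishing lower off-diagonal entry of $\Re(T)$ gives $q=-\bar p$. Running the same argument at $y$ with the rotated operator $e^{-i\gamma}T$ yields $q=-e^{2i\gamma}\bar p$, so $(e^{2i\gamma}-1)\bar p=0$. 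Now $\|T\|>1$ forces the off-diagonal to be nonzero, so $p\neq 0$ and hence $e^{2i\gamma}=1$. The value $e^{i\gamma}=1$ is impossible: then $T=\left[\begin{smallmatrix}1&-\bar p\\ p&1\end{smallmatrix}\right]=I+N$ with $N$ skew-Hermitian, giving $w(T)=\sqrt{1+|p|^2}>1$. Thus $e^{i\gamma}=-1$, and replacing $y$ by a unimodular multiple (which rotates $p$ without changing $|p|$ or the form) brings $T$ to
\[
T=\begin{pmatrix}1&-r\\ r&-1\end{pmatrix},\qquad r=|\langle Tx,y\rangle|,
\]
where testing on $\tfrac{1}{\sqrt2}(x+e^{i\theta}y)$ shows $w(T)\ge|p|$, so $0<r\le 1$; moreover a direct numerical-range computation (e.g.\ via the elliptical range theorem) shows $w\bigl(\left[\begin{smallmatrix}1&-s\\ s&-1\end{smallmatrix}\right]\bigr)=1$ for all $|s|\le 1$.

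With this normal form the two implications are short. If $r<1$, then for small $\epsilon>0$ the matrices $\left[\begin{smallmatrix}1&-(r-\epsilon)\\ r-\epsilon&-1\end{smallmatrix}\right]$ and $\left[\begin{smallmatrix}1&-(r+\epsilon)\\ r+\epsilon&-1\end{smallmatrix}\right]$ still have numerical radius $1$, average to $T$, and differ from $T$, so $T$ is not nu-extreme. Conversely, if $r=1$ and $T=tA+(1-t)B$ with $w(A),w(B)\le 1$ and $t\in(0,1)$, then $w(A)=w(B)=1$, and Lemma \ref{lem-gen} fixes the diagonals of $A$ and $B$ to $(1,-1)$. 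Rerunning the top-eigenvector computation for $A$ and $B$ forces them into the same form $\left[\begin{smallmatrix}1&-\bar{p_A}\\ p_A&-1\end{smallmatrix}\right]$ with $|p_A|\le w(A)=1$ (again via the test vectors), and likewise for $B$. Since $tp_A+(1-t)p_B=p=1$ with $|p_A|,|p_B|\le 1$, strict convexity of the unit disk gives $p_A=p_B=1$, i.e.\ $A=B=T$.

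The main obstacle is the structural reduction: it is what collapses the apparent two-parameter freedom in the off-diagonal entries down to the single real parameter $r=|\langle Tx,y\rangle|$ and is the reason the hypotheses $w(T)=1<\|T\|$ with $\{x,y\}\subseteq M_{w(T)}$ are so restrictive. Once the form $\left[\begin{smallmatrix}1&-r\\ r&-1\end{smallmatrix}\right]$ is in hand, both directions amount to the elementary fact that $r=1$ is an extreme point of $[0,1]$ while $r\in(0,1)$ is not. I expect the delicate points to be justifying that $x$ (resp.\ $y$) must be a \emph{top} eigenvector of the relevant real part rather than merely a critical point, and checking that the two normalizations (the unimodular scaling of $T$ and of $y$) preserve all the data of the problem.
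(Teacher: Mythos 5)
Your proposal is correct, and it reaches the conclusion by a genuinely different route than the paper, even though both start the same way (working in the basis $\{x,y\}$ and normalizing $\langle Tx,x\rangle=1$). Where the paper pins down the structure of $T$ by citing the eigenvector condition of \cite[Th. 2.6]{SMBP} at numerical-radius attaining vectors, you derive the same constraints elementarily: $w(T)=1$ gives $\Re(e^{i\theta}T)\preceq I$, and attainment of the quadratic form's maximum at a unit vector makes it an eigenvector, yielding $q=-\bar p$ and then $q=-e^{2i\gamma}\bar p$. You also push the reduction further than the paper does: you rule out $\langle Ty,y\rangle=+1$ at the outset (via the $I+N$, $N$ skew-Hermitian observation; the paper only forces $a=-1$ inside the necessity direction, via spectral radius) and rotate $y$ to reach the one-parameter real normal form $\left(\begin{smallmatrix}1&-r\\ r&-1\end{smallmatrix}\right)$, whose numerical radius you compute once and for all with the elliptical range theorem instead of the paper's unitary triangularization plus \cite[Th. 2.3]{PB12}. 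The two proofs then diverge most in the sufficiency direction: the paper observes that $\Re(T)$ and $\Im(T)$ are isometries, invokes $\|\Re(e^{i\theta}S)\|\le w(S)$ and Kadison's theorem \cite{K} to freeze both parts of any decomposition; you instead use Lemma \ref{lem-gen} to freeze the diagonals, rerun your eigenvector argument to force the antisymmetric form on $A$ and $B$, and finish by strict convexity of the unit disk at the extreme point $1$. Your necessity direction (perturbing $r$ to $r\pm\epsilon$ inside the antisymmetric family) is also simpler than the paper's, which perturbs with a complex $\beta$ asymmetrically and needs explicit unitaries to triangularize before applying \cite{PB12}. What your route buys is self-containedness (no appeal to \cite{SMBP}, \cite{K}, or \cite{PB12}) and a cleaner normal form that makes both implications one-line convexity facts; what the paper's route buys is brevity given its cited machinery, and its sufficiency argument exhibits an instructive link between operator-norm extremality and nu-extremality.
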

	\begin{proof}
	Recall that, for any scalar $\lambda$ with $|\lambda|=1,$ $T$ is a nu-extreme contraction if and only if $\lambda T$ is a nu-extreme contraction. Therefore, without loss of generality, we may assume that $\langle Tx,x\rangle =1.$ (For if $\langle Tx,x\rangle =\lambda,$ then we consider $\overline{\lambda}T$ instead of $T.$) We claim that $\langle Ty,y\rangle \in \mathbb{R}.$ Let $\langle Ty,y\rangle =a+ib$ for some real scalars $a,b.$ Then $a^2+b^2=1.$ Since $x,y\in M_{w(T)},$ from \cite[Th. 2.6]{SMBP}, it follows that for each $z\in \{x,y\},$  $z$ is an eigenvector of  $\langle \Re(T)z,z\rangle \Re(T)+\langle \Im(T)z,z\rangle \Im(T)$ corresponding to the eigenvalue $w^2(T)=1.$ 	Thus,
		\begin{eqnarray*}
			\langle \Re(T)x,x\rangle \Re(T)x+\langle \Im(T)x,x\rangle \Im(T)x&=&x\\
			\Rightarrow  	\langle \Re(T)x,x\rangle \langle \Re(T)x,y\rangle+\langle \Im(T)x,x\rangle \langle \Im(T)x,y\rangle&=&0\\
			\Rightarrow \Re (\langle Tx,x \rangle) \langle \Re(T)x,y\rangle+  \Im (\langle Tx,x \rangle) \langle \Im(T)x,y\rangle &=&0\\
			\Rightarrow \langle \Re(T)x,y\rangle&=&0\\
			\Rightarrow \langle Tx,y\rangle &=& -\overline{\langle Ty,x\rangle}.
		\end{eqnarray*}
		Suppose that $\langle Ty,x\rangle =\alpha.$ 
		Now, using $\langle \Re(T)y,y\rangle \Re(T)y+\langle \Im(T)y,y\rangle \Im(T)y=y$ and proceeding similarly, we get that $i\alpha b=0.$  If $\alpha=0,$ then it is easy to check that $\|T\|=1,$ a contradiction. Therefore, we must have $b=0.$ Thus, with respect to the basis $\{x,y\}$ the matrix of $T,~\Re(T)$ and $\Im(T)$ are 
		\[ T=\left[ \begin{array}{cc}
			1 & \alpha \\
			-\overline{\alpha} & a
		\end{array} \right],~
		\Re(T)=\left[ \begin{array}{cc}
			1 & 0 \\
			0 & a
		\end{array} \right],~
		\Im(T)=\left[ \begin{array}{cc}
			0 & -i\alpha \\
			i\overline{\alpha} & 0
		\end{array} \right].
		\]
		Clearly, $\Re(T)$ is an isometry and $\Im(T)$ is a scalar multiple of an isometry. \\
		We first prove the sufficient part of the theorem.  Let $|\langle Tx,y\rangle |=1.$
		If possible, suppose that $T$ is not a nu-extreme contraction. Then there are $T_1,T_2\in L(H)$ such that $T_1\neq T\neq T_2, $ $w(T_1)=w(T_2)=1$ and $T=\frac{1}{2}T_1+\frac{1}{2}T_2.$ Hence,
		\[T=\frac{1}{2}\Big(\Re(T_1)+\Re(T_2)\Big)+\frac{i}{2}\Big(\Im(T_1)+\Im(T_2)\Big),\]
		which shows that 
		\[\Re(T)=\frac{1}{2}\Big(\Re(T_1)+\Re(T_2)\Big),~\Im(T)=\frac{1}{2}\Big(\Im(T_1)+\Im(T_2)\Big).\]
			It is well-known that $w(T_1)=\sup_{\theta\in \mathbb{R}}{\|\Re(e^{i\theta}T_1)\|}$ (see \cite[Lem. 4.5]{HKS} and \cite[Proof of Th. 2.1]{Y}). Therefore, $\|\Re(T_1)\| \leq w(T_1)=1.$ Similarly, $\|\Re(T_2)\|\leq 1.$ Since $\Re(T)$ is an isometry, from \cite{K} it follows that $\Re(T)$ is an extreme contraction. Therefore, 
		\begin{eqnarray}\label{eq-isore}
			\Re(T)=\Re(T_1)=\Re(T_2).
		\end{eqnarray}
		Similarly, from $\|\Im(T_1)\|=\|\Re(e^{\frac{3i\pi}{2}}T_1)\|\leq w(T_1)=1,~ \|\Im(T_2)\|=\|\Re(e^{\frac{3i\pi}{2}}T_2)\|\leq w(T_2)=1 $ and
		the fact that $\Im(T)$ is an isometry, it follows that  
		\begin{eqnarray}\label{eq-isoim}
			\Im(T)=\Im(T_1)=\Im(T_2).	
		\end{eqnarray}
		Now, from (\ref{eq-isore}) and (\ref{eq-isoim}), we get $T=T_1=T_2,$ which is a contradiction. Thus, $T$ is a nu-extreme contraction. This completes the proof of the sufficient part.  \\
		Conversely, suppose that $T$ is a nu-extreme contraction. Observe that 
		$$0<|\alpha|=\|\Im(T)\|=\|\Re(e^{\frac{3i\pi}{2}}T)\|\leq w(T)=1.$$
		If possible, suppose that $|\alpha|<1.$ We claim that $a=-1.$ Since if $a=1,$ and $\lambda$ is an eigenvalue of $T,$ then $|\lambda|=\sqrt{1+|\alpha|^2}>1=w(T),$ a contradiction. Therefore, $a=-1.$ Now, choose a non-zero scalar $\beta$ such that $|\alpha\pm \beta|<1.$ Consider $T_1,T_2\in L(H)$ whose matrix with respect to the basis $\{x,y\}$ are 
		\[ T_1=\left[ \begin{array}{cc}
			1 & \alpha+\beta\\
			-\overline{\alpha-\beta} & -1
		\end{array} \right],~
		T_2=\left[ \begin{array}{cc}
			1 & \alpha-\beta \\
			-\overline{\alpha+\beta} & -1
		\end{array} \right].
		\]
		Clearly, $T_1\neq T\neq T_1$ and $T=\frac{1}{2}(T_1+T_2).$ Check that $T_1,T_2$ are weakly unitarily invariant to respectively
		\[ A_1=\left[ \begin{array}{cc}
			\sqrt{1-|\alpha+\beta|^2}& 2\overline{(\alpha+\beta)}\\
			0 & - 	\sqrt{1-|\alpha+\beta|^2}
		\end{array} \right]~\text{and}
		\]
		
		\[
		A_2= \left[ \begin{array}{cc}
			\sqrt{1-|\alpha-\beta|^2}& 2\overline{(\alpha-\beta)}\\
			0 & - 	\sqrt{1-|\alpha-\beta|^2}
		\end{array} \right].
		\]
		More precisely, if we consider $z_1=(\alpha+\beta) x+(-1+\sqrt{1-|\alpha+\beta|^2})y,$ and 
		\[ U=\frac{1}{\|z_1\|}\left[ \begin{array}{cc}
			(\alpha+\beta)& 1-\sqrt{1-|\alpha+\beta|^2}\\
			-1+\sqrt{1-|\alpha+\beta|^2} & \overline{(\alpha+\beta)}
		\end{array} \right],
		\]
		where $U$ is written with respect to the orthonormal basis $\{x,y\},$ then $U$ is a unitary and $U^*T_1U=A_1.$ Similarly,  if we consider $z_2=(\alpha-\beta) x+(-1+\sqrt{1-|\alpha-\beta|^2})y,$ and 
		\[ V=\frac{1}{\|z_2\|}\left[ \begin{array}{cc}
			(\alpha-\beta)& 1-\sqrt{1-|\alpha-\beta|^2}\\
			-1+\sqrt{1-|\alpha-\beta|^2} & \overline{(\alpha-\beta)}
		\end{array} \right],
		\]
		where $V$ is written with respect to the orthonormal basis $\{x,y\},$
		then $V$ is a unitary and $V^*T_2V=A_2.$
		Now, using \cite[Th. 2.3]{PB12}, we get $w(A_1)=w(A_2)=1.$ Thus, $w(T_1)=w(T_2)=1.$ This contradicts that $T$ is a nu-extreme contraction. Therefore, we must have $|\alpha|=1.$ This completes the proof of the theorem.
	\end{proof}
	Next, we give an example of a non-unitary operator which is a nu-extreme contraction. The following example once again illustrate the difference of the extremal structure of an operator space endowed with the usual operator norm and the numerical radius norm. 
	\begin{example}
		Consider the operator	
		\[
		T= \left[ \begin{array}{cc}
			1& i\\
			i & - 	1
		\end{array} \right]\in L(\mathbb{C}^2). 
		\]
		Note that, $T$ is weakly unitarily invariant to \[
		A= \left[ \begin{array}{cc}
			0& 2i\\
			0 & 0
		\end{array} \right]~(\text{see~Lemma ~\ref{lem-2dimself}}). 
		\]
		Clearly, $w(T)=w(A)=1$ and $(1,0),(0,1)\in M_{w(T)}.$ Therefore, by Theorem \ref{th-2-2}, $T$ is a nu-extreme contraction. On the other hand, since $T$ is a non-unitary operator, $T$ is not an extreme contraction of $L(\mathbb{C}^2)$ with respect to the usual operator norm.
	\end{example}
	Now, we give an example of a non-unitary operator whose numerical radius attainment set contains an orthonormal basis, still which is not a nu-extreme contraction. 
	\begin{example}
		Consider the operator	
		\[
		T= \left[ \begin{array}{cc}
			1& \frac{1}{2}\\
			-\frac{1}{2} & - 	1
		\end{array} \right]\in L(\mathbb{C}^2). 
		\]
		Note that, $T$ is weakly unitarily invariant to \[
		A= \left[ \begin{array}{cc}
			\frac{\sqrt{3}}{2}& 1\\
			0 & -	\frac{\sqrt{3}}{2}
		\end{array} \right]. 
		\]
		Thus, by \cite[Th. 2.3]{PB12}, $w(T)=w(A)=1.$ Clearly, $(1,0),(0,1)\in M_{w(T)}.$ Therefore, by Theorem \ref{th-2-2}, $T$ is not a nu-extreme contraction.
	\end{example}
	
	The only remaining case to explore, for nu-extreme contractions on two-dimensional Hilbert space, is the class of all those not normaloid operators $T,$ for which $M_{w(T)}$ does not contain any  orthonormal basis. In the next theorem, we consider this case partially. To get our desired result, we prove the following lemmas.
	\begin{lemma}\label{lem-tou}
		Suppose $\dim(H)=2.$ Let $T\in L(H)$ and $w(T)=1.$ Suppose 
		\[T=\left[ \begin{array}{cc}
			\beta & \zeta \\
			0 & \beta
		\end{array} \right],~\text{where~} \beta,\zeta\in \mathbb{C}\setminus\{0\}.
		\]
		Then $T$ is not a nu-extreme contraction.
	\end{lemma}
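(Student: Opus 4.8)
The plan is to exhibit a nonzero perturbation $S$ with $w(T+S)=w(T-S)=1$, so that $T=\tfrac12(T+S)+\tfrac12(T-S)$ with $T+S\neq T$ witnesses that $T$ is not a nu-extreme contraction (recall the definition only asks $w(\cdot)\le 1$ of the two pieces). First I would normalize $T$. Since $T$ is a nu-extreme contraction if and only if $\lambda T$ is (for $|\lambda|=1$), and if and only if any operator weakly unitarily invariant to $T$ is, I multiply by $\overline{\beta}/|\beta|$ and conjugate by the diagonal unitary $\operatorname{diag}(1,e^{i\psi})$ to reduce to
\[T=\left[\begin{array}{cc} b & c \\ 0 & b\end{array}\right],\qquad b,c>0.\]
The numerical range of $T$ is the disk centered at $b$ of radius $c/2$, so $w(T)=b+c/2=1$, and this disk is internally tangent to the unit circle at the point $z=1$; equivalently, the radial direction at the contact point is the real axis.

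The key idea is to split the doubly degenerate eigenvalue $\beta=b$ by moving it \emph{perpendicular} to this radial direction. Concretely, I set
\[S=\left[\begin{array}{cc} is' & 0 \\ 0 & -is'\end{array}\right],\qquad s'\in\mathbb{R}\setminus\{0\},\]
so that $T\pm S$ are upper triangular with eigenvalues $b\pm is'$ and the same off-diagonal entry $c$. Geometrically, $W(T+S)$ is now an ellipse centered at $b$, with horizontal semi-axis $c/2$ and \emph{vertical} (hence tangent-aligned) semi-axis $q=\tfrac12\sqrt{4s'^2+c^2}$; because its elongation is along the tangent to the unit circle rather than along the radius, it should continue to fit inside the unit disk while remaining tangent at $z=1$.

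To turn this picture into a bound I would compute $w(T+S)$ directly, either from Theorem \ref{th-nurad} applied with $\lambda_1=b+is'$, $\lambda_2=b-is'$, $A=c$, or from the elliptical range description of $W(T+S)$. Maximizing $|z|^2$ over the boundary ellipse $z=b+\tfrac{c}{2}\cos t+i\,q\sin t$ reduces, after the substitution $u=\cos t$, to maximizing
\[h(u)=b^2+q^2+bc\,u+\Big(\tfrac{c^2}{4}-q^2\Big)u^2,\qquad u\in[-1,1].\]
Its unconstrained critical point sits at $u^\ast=bc/(2s'^2)$, so as long as $0<s'^2\le bc/2$ one has $u^\ast\ge 1$ and the concave $h$ is increasing on $[-1,1]$, whence the maximum is $h(1)=(b+c/2)^2=1$. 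Thus $w(T+S)=1$; and since $T-S$ has the same eigenvalue pair $b\pm is'$ and the same Frobenius norm as $T+S$, the elliptical range theorem gives $W(T-S)=W(T+S)$, so $w(T-S)=1$ as well. Choosing any such $s'\neq0$ then yields $T=\tfrac12(T+S)+\tfrac12(T-S)$ with $T+S\neq T$, proving $T$ is not a nu-extreme contraction.

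I expect the one delicate point to be the threshold $s'^2\le bc/2$: for larger $s'$ the interior critical point $u^\ast$ enters $[-1,1]$ and $w(T\pm S)$ strictly exceeds $1$, so the perturbation must be kept small — which is possible precisely because $b,c>0$. It is also worth noting that for directions bounded away from the contact point the supporting functionals of $W(T)$ are strictly below $1$, so only the behavior near $z=1$ constrains the choice of $S$. The normalization step and the verification that $W(T\pm S)$ is exactly this ellipse (via the elliptical range theorem or the cited formula) are routine.
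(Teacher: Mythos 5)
Your proof is correct, and it takes a genuinely different route from the paper's. The paper never perturbs $T$: after rotating by $e^{-i\theta}$ (where $\beta=e^{i\theta}|\beta|$), it splits the operator into its scalar and nilpotent parts,
\[
e^{-i\theta}T=tA_1+(1-t)A_2,\qquad A_1=\tfrac{|\beta|}{t}I,\quad A_2=\left[\begin{array}{cc}0&\tfrac{e^{-i\theta}\zeta}{1-t}\\0&0\end{array}\right],
\]
with the weight $t=\tfrac{2}{k+2}$, $k=\tfrac{|\zeta|}{|\beta|}$, chosen exactly so that $w(A_1)=w(A_2)=\tfrac{|\beta|(k+2)}{2}=w(T)=1$; the only numerical radius computation needed is $w(e^{-i\theta}T)=|\beta|+\tfrac{|\zeta|}{2}$, quoted from \cite[Th. 2.3]{PB12}, since the numerical radii of a scalar operator and of a nilpotent $2\times 2$ matrix are immediate. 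You instead keep a symmetric midpoint decomposition $T=\tfrac12(T+S)+\tfrac12(T-S)$ with $S=diag(is',-is')$, splitting the double eigenvalue along the tangent direction to the unit circle, and verify $w(T\pm S)=1$ by the elliptical range theorem together with a one-variable concave maximization; your normalization step is legitimate (the paper's introduction notes that both unimodular scaling and weak unitary invariance preserve nu-extremeness), and the threshold $0<s'^2\le bc/2$ is correctly identified and satisfiable precisely because $b,c>0$. What each approach buys: the paper's decomposition is shorter and requires no numerical-range geometry beyond a cited formula, at the cost of using two pieces (a scalar and a nilpotent) far from $T$ and a non-symmetric weight $t\neq\tfrac12$; yours is more geometric, makes transparent that the degenerate eigenvalue is the source of non-extremeness (any sufficiently small tangential splitting keeps the numerical radius equal to $1$), and is in the same spirit as the perturbative constructions the paper uses for the non-normaloid cases (Cases II and III of Theorem \ref{th-2fin}), where off-diagonal rather than diagonal entries are perturbed.
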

	\begin{proof}
		Suppose that $\beta=e^{i\theta}|\beta|,$ for some $\theta\in[0,2\pi).$ Let $\frac{|\zeta|}{|\beta|}=k(>0)$ and $t=\frac{2}{k+2}.$ Then $t\in(0,1).$ Consider 
		\[A_1=\left[ \begin{array}{cc}
			\frac{|\beta|}{t} & 0 \\
			0 & \frac{|\beta|}{t} 
		\end{array} \right],~~
		A_2=\left[ \begin{array}{cc}
			0 & \frac{e^{-i\theta}\zeta}{1-t} \\
			0 & 0
		\end{array} \right].
		\]
		Clearly, $w(A_1)=\frac{|\beta|}{t}=\frac{|\beta|(k+2)}{2}$ and $w(A_2)=\frac{|\zeta|}{2(1-t)}=\frac{k|\beta|}{2(1-t)}=\frac{|\beta|(k+2)}{2}.$ From \cite[Th. 2.3]{PB12}, it follows that
		\[w(e^{-i\theta}T)=\frac{1}{2}\Big\{2|\beta|+|\zeta|\Big\}=\frac{1}{2}\Big\{2|\beta|+k|\beta|\Big\}=\frac{|\beta|(k+2)}{2}.\] 
		Observe that $1=w(T)=w(e^{-i\theta}T)=w(A_1)=w(A_2).$ Moreover, 
		\[e^{-i\theta}T=tA_1+(1-t)A_2, \text{~and~}A_1\neq e^{-i\theta}T\neq A_2.\]
		Therefore, $e^{-i\theta}T$ is not a nu-extreme contraction. Consequently, $T$ is not a nu-extreme contraction. This completes the proof of the lemma.
		
	\end{proof}

	We use the following theorem from \cite{JSG} in our next lemma.
	
			\begin{theorem}\label{th-Johnson}\cite[Th. 1.1]{JSG}
		Let $\dim(H)=2$ and $A\in L(H).$ Suppose $A$ is unitarily invariant to 
		\[\left[ \begin{array}{cc}
				\lambda_1 & \zeta \\
				0 & \lambda_2
			\end{array} \right].
			\]
				Suppose $|\lambda_1|=|\lambda_2|~(=|\lambda|),$ but $\lambda_1\neq \lambda_2.$ Then 
				\begin{equation*}
					w(A)=
					\begin{cases}
						|\lambda|(z\sin{\theta}+\cos{\theta}) & \text{if } z \geq \tan{\theta}\\
						|\lambda|\sqrt{1+z^2} & \text{if } 0\leq z \leq \tan{\theta}.
						\end{cases}
					\end{equation*}
			Here, $\theta$ is a half angle between the directions of  $\lambda_1$ and $\lambda_2,$ $\theta\in(0,\frac{\pi}{2}]$ and $z=\frac{|\zeta|}{2|\lambda|\sin{\theta}}.$
			\end{theorem}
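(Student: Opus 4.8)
The plan is to use the elliptical range theorem to turn the computation of $w(A)$ into a one–variable optimization over an explicit ellipse, whose natural case split reproduces the two branches of the stated formula.

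First I would normalize. Since $w(\cdot)$ is invariant under unitary similarity and under multiplication by unimodular scalars, I may replace $A$ by $e^{-i\psi}\left[\begin{smallmatrix}\lambda_1 & \zeta\\ 0 & \lambda_2\end{smallmatrix}\right]$, choosing $\psi$ to be the argument of the angle bisector of the directions of $\lambda_1$ and $\lambda_2$. After this rotation I may assume $\lambda_1=|\lambda|e^{i\theta}$ and $\lambda_2=|\lambda|e^{-i\theta}$, where $2\theta$ is the angle between $\lambda_1$ and $\lambda_2$. In particular $|\lambda_1-\lambda_2|=2|\lambda|\sin\theta$, and the center $\tfrac12(\lambda_1+\lambda_2)=|\lambda|\cos\theta$ lies on the positive real axis.

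Next I would invoke the elliptical range theorem for $2\times2$ matrices: the numerical range $W(A)$ is the closed elliptical disc with foci $\lambda_1,\lambda_2$ and minor axis of length $(\operatorname{tr}(A^*A)-|\lambda_1|^2-|\lambda_2|^2)^{1/2}$. A direct computation gives $\operatorname{tr}(A^*A)=|\lambda_1|^2+|\lambda_2|^2+|\zeta|^2$, so the semi-minor axis is $b=|\zeta|/2$ and the half focal distance is $c=\tfrac12|\lambda_1-\lambda_2|=|\lambda|\sin\theta$. Thus, in the normalized coordinates, the ellipse has real center $m=|\lambda|\cos\theta$, horizontal semi-minor axis $b=|\zeta|/2$, and vertical semi-major axis $a$ with $a^2=b^2+c^2=\tfrac14|\zeta|^2+|\lambda|^2\sin^2\theta$. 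Since the farthest point of a convex disc from the origin lies on its boundary, I can compute $w(A)$ by maximizing $|m+b\cos t+i\,a\sin t|$ over $t$. Setting $u=\cos t\in[-1,1]$, this reduces to maximizing the downward parabola $f(u)=(m^2+a^2)+2mb\,u+(b^2-a^2)u^2$ on $[-1,1]$, whose unconstrained vertex is at $u^\ast=\frac{mb}{a^2-b^2}=z\cot\theta$ with $z=\frac{|\zeta|}{2|\lambda|\sin\theta}$. If $z\cot\theta\ge1$, i.e. $z\ge\tan\theta$, the maximum is at $u=1$, giving $f(1)=(m+b)^2$ and hence $w(A)=m+b=|\lambda|(z\sin\theta+\cos\theta)$; if $0\le z\le\tan\theta$ the interior vertex is admissible and one computes $f(u^\ast)=|\lambda|^2(1+z^2)$, giving $w(A)=|\lambda|\sqrt{1+z^2}$.

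The only delicate point — and the step I would treat most carefully — is getting the geometry of the ellipse exactly right: verifying that the foci lie on the (vertical) major axis so that $b=|\zeta|/2$ is the semi-minor and $a$ the semi-major axis, and that the normalizing rotation places the center on the positive real axis at distance $|\lambda|\cos\theta$ from the origin. Once the ellipse is correctly located, everything else is routine: the maximization is a single-variable quadratic problem, and the threshold $z=\tan\theta$ appears automatically as the value at which the critical point $u^\ast$ crosses the endpoint $u=1$ of the admissible interval.
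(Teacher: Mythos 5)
Your proof is correct, but note that there is nothing in the paper to compare it against: the paper states this result as a quotation of Theorem 1.1 of the cited reference [JSG] (Johnson--Spitkovsky--Gottlieb) and gives no proof, so your argument serves as a self-contained substitute for the citation rather than an alternative to an in-paper proof.

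The details check out. After the unimodular rotation the foci are $\lambda_{1,2}=|\lambda|e^{\pm i\theta}$, so the focal segment is vertical, the center is $m=|\lambda|\cos\theta\geq 0$, and the half focal distance is $c=|\lambda|\sin\theta$, which is strictly positive precisely because $\lambda_1\neq\lambda_2$ (this hypothesis is what makes the ellipse nondegenerate in the focal direction and the parabola below strictly concave). The elliptical range theorem then gives the horizontal semi-minor axis $b=|\zeta|/2$ and vertical semi-major axis $a$ with $a^2=b^2+c^2$, which is exactly the geometric point you flagged as delicate, and you resolved it correctly. The one-variable reduction is also right: $f(u)=(m^2+a^2)+2mbu+(b^2-a^2)u^2$ opens downward since $a^2-b^2=c^2>0$, its vertex $u^\ast=mb/(a^2-b^2)=z\cot\theta$ is automatically nonnegative (so no third case at $u=-1$ arises), and the two evaluations $f(1)=(m+b)^2$ and $f(u^\ast)=m^2+a^2+m^2b^2/c^2=|\lambda|^2(1+z^2)$ yield the two stated branches, which agree at the threshold $z=\tan\theta$. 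The boundary situations are also covered: if $\zeta=0$ the formula still follows from your parametrization with $b=0$, and if $\theta=\pi/2$ then $u^\ast=0$, so only the second branch is active, consistent with the statement. This elliptical-range argument is the standard route to this formula and is essentially the method of the cited source, so no further reconciliation is needed.
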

		
		In the next lemma, we obtain the numerical radius of certain class of operators. 
		\begin{lemma}\label{lem-wt}
			Let $\dim(H)=2.$ Suppose 
				\[ T=\left[ \begin{array}{cc}
				1 & \alpha \\
				-\overline{\alpha} & a
			\end{array} \right]\in L(H), ~\text{where~} a\in \mathbb{R},~ |a|<1 \text{~and~}\alpha \in \mathbb{C}\setminus\{0\}.
			\]
			Suppose either of the following holds.\\
			\rm(i) $(1-a)^2<4|\alpha|^2\leq 2(1-a).$\\
			\rm(ii) $(1-a)^2>4|\alpha|^2.$\\
			Then $w(T)=1.$
		\end{lemma}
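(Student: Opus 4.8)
The plan is to compute $w(T)$ directly through the identity $w(T)=\sup_{\theta\in\mathbb{R}}\|\Re(e^{i\theta}T)\|$ already used in the proof of Theorem \ref{th-2-2}. Since $\Re(e^{i\theta}T)$ is self-adjoint, $\|\Re(e^{i\theta}T)\|$ is its spectral radius, and replacing $\theta$ by $\theta+\pi$ negates the operator and swaps its two eigenvalues; hence the supremum over $\theta$ of the norm equals the supremum of the \emph{largest} eigenvalue. First I would record the matrix $\Re(e^{i\theta}T)=\begin{pmatrix} \cos\theta & i\alpha\sin\theta \\ -i\overline{\alpha}\sin\theta & a\cos\theta \end{pmatrix}$ and compute its larger eigenvalue, which works out to $\tfrac{1}{2}\big[(1+a)\cos\theta+\sqrt{(1-a)^2\cos^2\theta+4|\alpha|^2\sin^2\theta}\big]$. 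Because $1+a>0$ and the radicand depends only on $\cos^2\theta$, the maximum over $\theta$ is attained with $\cos\theta\ge 0$, so writing $c=\cos\theta\in[0,1]$ I reduce the problem to maximizing $g(c)=\tfrac{1}{2}\big[(1+a)c+\sqrt{4|\alpha|^2+((1-a)^2-4|\alpha|^2)c^2}\big]$ on $[0,1]$.

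The key observation is that $g(1)=\tfrac{1}{2}[(1+a)+(1-a)]=1$, so it suffices to prove $g(c)\le 1$ for all $c\in[0,1]$. Isolating the square root and squaring (legitimate because $2-(1+a)c\ge 1-a>0$ on $[0,1]$), this inequality is equivalent to
\[\psi(c):=(a+|\alpha|^2)c^2-(1+a)c+(1-|\alpha|^2)\ge 0\qquad(c\in[0,1]).\]
One checks $\psi(1)=0$, so $c=1$ is always a root, the other being $r=\tfrac{1-|\alpha|^2}{a+|\alpha|^2}$ when $a+|\alpha|^2\ne 0$. I would then observe that both hypotheses funnel into the single inequality $4|\alpha|^2\le 2(1-a)$: it is assumed outright in (i), while in (ii) it follows from $(1-a)^2>4|\alpha|^2$ together with $a>-1$ (which gives $(1-a)^2<2(1-a)$). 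This inequality, equivalently $2|\alpha|^2\le 1-a$, is exactly what forces $\psi\ge 0$ on $[0,1]$, whence $w(T)=1$.

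The main obstacle is organizing this last step cleanly, since the sign of the leading coefficient $a+|\alpha|^2=\det T$ is not fixed and governs the shape of $\psi$. If $a+|\alpha|^2>0$ the parabola opens upward and $2|\alpha|^2\le 1-a$ is precisely what pushes the second root to satisfy $r\ge 1$, keeping $\psi\ge 0$ on $[0,1]$; if $a+|\alpha|^2=0$ then $\psi(c)=(1+a)(1-c)\ge 0$ outright; and if $a+|\alpha|^2<0$ (which can only occur under (ii)) the parabola opens downward with negative root $r<0$, so $[0,1]$ lies between the two roots and again $\psi\ge 0$. Along the way one notes $|\alpha|^2<1$ (a consequence of $4|\alpha|^2\le 2(1-a)<4$), which also secures $\psi(0)=1-|\alpha|^2>0$.

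As an alternative route matching the earlier proofs, one may reduce $T$ to its Schur (upper-triangular) form and apply the cited numerical radius formulas. In case (i) the eigenvalues are complex conjugates of equal modulus, so Theorem \ref{th-Johnson} applies, and the hypothesis $4|\alpha|^2\le 2(1-a)$ lands precisely in the branch $z\ge\tan\theta$, where the formula collapses to $|\lambda|\cdot\tfrac{1}{|\lambda|}=1$. In case (ii) the eigenvalues are real, so Theorem \ref{th-nurad} applies with $\theta=0$ and $k=K=0$, the value $\phi_0=0$ giving $x_0=1,\ y_0=0$ and hence $w(T)=1$. I would nonetheless prefer the self-contained eigenvalue computation above, since it treats both cases uniformly and avoids having to verify that $\phi_0=0$ selects the maximizing branch in Theorem \ref{th-nurad}.
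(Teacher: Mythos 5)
Your proof is correct, but it follows a genuinely different route from the paper's. The paper conjugates $T$ by explicitly constructed unitaries into upper-triangular (Schur) form and then invokes cited formulas: Theorem \ref{th-Johnson} (from \cite{JSG}) in case (i), where the eigenvalues are conjugate complex numbers of equal modulus, and \cite[Th. 2.3]{PB12} in case (ii), where they are real. You instead work with the identity $w(T)=\sup_{\theta\in\mathbb{R}}\|\Re(e^{i\theta}T)\|$ (which the paper itself uses in Theorems \ref{th-2-2} and \ref{th-2fin}), compute the larger eigenvalue of the self-adjoint matrix $\Re(e^{i\theta}T)$ -- exactly the expression appearing in Case III of the paper's Theorem \ref{th-2fin} -- and reduce everything to the elementary quadratic inequality $\psi(c)=(a+|\alpha|^2)c^2-(1+a)c+(1-|\alpha|^2)\geq 0$ on $[0,1]$; your case analysis on the sign of $a+|\alpha|^2$ and the verification that both hypotheses yield $2|\alpha|^2\leq 1-a$ are all sound. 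What each approach buys: the paper's Schur forms are not wasted effort, since they are reused verbatim in Case I of Theorem \ref{th-2fin} (reduction to Lemma \ref{lem-tou}), and the citations keep the computation short; your argument is self-contained, avoids constructing unitaries, treats (i) and (ii) uniformly, and in fact proves slightly more -- it gives $w(T)=1$ under the single hypothesis $4|\alpha|^2\leq 2(1-a)$, which also covers the boundary case $4|\alpha|^2=(1-a)^2$ excluded from both (i) and (ii), and it makes transparent why $2|\alpha|^2+a-1\leq 0$ is the operative condition, echoing inequality (\ref{eq-tou02}) in the paper. Your caution about the alternative route through Theorem \ref{th-nurad} is also well placed: that statement does not specify which solution $\phi_0$ of the defining equation yields the maximum, so the self-contained computation is the safer choice.
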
 
		\begin{proof}
			(i) Let $K=\sqrt{4|\alpha|^2-(1-a)^2}.$ Suppose $T$ is written with respect to the orthonormal basis, say $\{e_1,e_2\}.$ Let $x=2\alpha e_1+(-1+a+iK)e_2.$ Consider the unitary operator  
				\[ U=\frac{1}{\|x\|}\left[ \begin{array}{cc}
				2\alpha& 1-a+iK\\
				-1+a+iK& 2 \overline{\alpha}
			\end{array} \right],
			\]
			where $U$ is written with respect to the orthonormal basis $\{e_1,e_2\}.$ 
		Then 
			\[ U^*TU=\left[ \begin{array}{cc}
			\frac{1+a+iK}{2}& \frac{(1-a)(1-a+iK)}{2\alpha}\\
			0& \frac{1+a-iK}{2}
		\end{array} \right]=\left[ \begin{array}{cc}
		\lambda& \zeta\\
		0& \overline{\lambda}
	\end{array} \right],
		\]
		where $\lambda =\frac{1+a+iK}{2}$ and $\zeta=\frac{1}{2\alpha}(1-a)(1-a+iK).$
		We use Theorem \ref{th-Johnson} to obtain $w(T).$ Let $\theta$ be a half angle between the directions of $\lambda$ and $\overline{\lambda},$ $\theta\in (0,\frac{\pi}{2}].$ Then 
		\[\tan \theta=\frac{K}{1+a}, ~\sin\theta=\frac{K}{2|\lambda|},~\cos\theta=\frac{1+a}{2|\lambda|}.\]
		Suppose $z=\frac{|\zeta|}{2|\lambda|\sin{\theta}}=\frac{1-a}{K}.$ Now, using  $2(1-a)\geq4|\alpha|^2,$ we get $z\geq\tan\theta.$ Thus, by Theorem \ref{th-Johnson}, 
		\[w(T)=w(U^*TU)=|\lambda|(z\sin\theta+\cos\theta)=\frac{|\zeta|}{2}+|\lambda|\cos\theta=\frac{1-a}{2}+\frac{1+a}{2}=1,\]
		completing the proof of (i). \\
		(ii) Let $R=\sqrt{(1-a)^2-4|\alpha|^2}.$ Suppose $T$ is written with respect to the orthonormal basis, say $\{e_1,e_2\}.$ Let $x=2\alpha e_1+(-1+a+R)e_2.$ Consider the unitary operator  
		\[ V=\frac{1}{\|x\|}\left[ \begin{array}{cc}
			2\alpha& 1-a-R\\
			-1+a+R& 2 \overline{\alpha}
		\end{array} \right],
		\]
		where $V$ is written with respect to the orthonormal basis $\{e_1,e_2\}.$ 
		Then 
		\[ V^*TV=\left[ \begin{array}{cc}
			\frac{1+a+R}{2}& 2\overline{\alpha}\\
			0& \frac{1+a-R}{2}
		\end{array} \right].
		\]
		Now, by \cite[Th. 2.3]{PB12}, we get
		\[w(T)=w(V^*TV)=\frac{1}{2}\Big\{\Big|\frac{1+a+R}{2}+\frac{1+a-R}{2}\Big|+\sqrt{R^2+4|\alpha|^2}\Big\}=1,\]
	completing the proof of (ii). 
      \end{proof}
  
  	Now, we are ready to prove our final theorem.
  
\begin{theorem}\label{th-2fin}
	Let $\dim(H)=2.$ Suppose $T\in L(H)$ is an operator such that $1=w(T)<\|T\|.$ Assume that $M_{w(T)}$ does not contain any orthonormal basis. Moreover, suppose that there exist $x\in M_{w(T)}$ such that the following are true.\\  
	\rm(i) Either $\arg{\langle Tx,x\rangle}=\pi+\arg{\langle Ty,y\rangle}$ or $\arg{\langle Tx,x\rangle}=\arg{\langle Ty,y\rangle},$ where  $x\perp y,$ $y\in S_H.$\\
	\rm(ii) $2|\langle Tx,y\rangle|^2\neq 1-\frac{\langle Ty,y\rangle}{\langle Tx,x\rangle},$ where  $x\perp y,$ $y\in S_H.$\\
	Then $T$ is not a nu-extreme contraction. 
\end{theorem}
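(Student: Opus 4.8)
The plan is to reduce $T$ to a normalized $2\times 2$ matrix form, to locate the pair $(|\alpha|^2,a)$ consisting of its off-diagonal modulus and real diagonal entry strictly inside an explicit region where the numerical radius is constantly $1$, and then to perturb the diagonal entry so as to exhibit $T$ as the midpoint of two distinct numerical-radius-one operators.

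\emph{Step 1: Normalisation and canonical form.} Since $x\in M_{w(T)}$ we have $|\langle Tx,x\rangle|=w(T)=1$; because $T$ is a nu-extreme contraction exactly when $\lambda T$ is for $|\lambda|=1$, and since conditions (i), (ii) involve only the argument difference $\arg\langle Tx,x\rangle-\arg\langle Ty,y\rangle$ and the ratio $\langle Ty,y\rangle/\langle Tx,x\rangle$, I may assume $\langle Tx,x\rangle=1$. Fix the orthonormal basis $\{x,y\}$. As $M_{w(T)}$ contains no orthonormal basis while $x\in M_{w(T)}$, the vector $y\notin M_{w(T)}$, so $|\langle Ty,y\rangle|<1$. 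Applying \cite[Th. 2.6]{SMBP} at $x$ gives $\Re(T)x=x$ (the coefficients there reduce to $\langle\Re(T)x,x\rangle=1$, $\langle\Im(T)x,x\rangle=0$), whence $\langle Tx,y\rangle=-\overline{\langle Ty,x\rangle}$. Writing $\alpha=\langle Ty,x\rangle$ and $a=\langle Ty,y\rangle$ yields
\[
T=\left[\begin{array}{cc} 1 & \alpha \\ -\overline{\alpha} & a \end{array}\right].
\]
Condition (i) forces $a\in\mathbb{R}$ (both alternatives assert $\arg a\in\{0,-\pi\}$), while $|a|<1$ from the previous line; moreover $\alpha\neq 0$, for otherwise $T=\mathrm{diag}(1,a)$ would give $\|T\|=1=w(T)$, contradicting $w(T)<\|T\|$. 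Finally, since $|\langle Tx,y\rangle|=|\alpha|$ and $\langle Tx,x\rangle=1$, condition (ii) reads $2|\alpha|^2\neq 1-a$.

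\emph{Step 2: Locating the parameters.} I claim that $w(T)=1$ together with $2|\alpha|^2\neq 1-a$ forces the strict inequality $4|\alpha|^2<2(1-a)$. The eigenvalues of $T$ are real and distinct when $4|\alpha|^2<(1-a)^2$, and form a complex-conjugate pair of common modulus $\sqrt{a+|\alpha|^2}$ when $4|\alpha|^2\geq(1-a)^2$. In the latter range, Theorem \ref{th-Johnson} (applied to the triangular form produced in the proof of Lemma \ref{lem-wt}) gives $w(T)=1$ exactly when $4|\alpha|^2\leq 2(1-a)$, and otherwise $w(T)^2=|\lambda|^2(1+z^2)$, which simplifies to
\[
w(T)^2-1=\frac{\bigl(2|\alpha|^2-(1-a)\bigr)^2}{4|\alpha|^2-(1-a)^2}.
\]
Here the denominator is positive in this sub-regime, so $4|\alpha|^2>2(1-a)$ would force $w(T)>1$. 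Since $w(T)=1$, we obtain $4|\alpha|^2\leq 2(1-a)$, and condition (ii) upgrades this to $4|\alpha|^2<2(1-a)$.

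\emph{Step 3: Perturbation.} For a small real $\epsilon>0$ set
\[
T_{\pm}=\left[\begin{array}{cc} 1 & \alpha \\ -\overline{\alpha} & a\pm\epsilon \end{array}\right].
\]
Each $T_\pm$ has exactly the form treated in Lemma \ref{lem-wt}, with real diagonal entry $a\pm\epsilon$ of modulus $<1$ and $\alpha\neq0$. Because $4|\alpha|^2<2(1-a)$ is strict, for all sufficiently small $\epsilon$ we still have $4|\alpha|^2<2\bigl(1-(a\pm\epsilon)\bigr)$, so $T_\pm$ falls under case (i) or case (ii) of Lemma \ref{lem-wt} (choosing $\epsilon$ to avoid the single value at which $4|\alpha|^2=(1-(a\pm\epsilon))^2$, where in any event $w=1$), giving $w(T_+)=w(T_-)=1$. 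Since $T=\tfrac12(T_++T_-)$ while $\epsilon\neq 0$ makes $T_+\neq T\neq T_-$, the operator $T$ is not a nu-extreme contraction.

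\emph{Main obstacle.} The delicate part is Step 2: pinning the admissible region down to the open set $4|\alpha|^2<2(1-a)$ requires the full case split of Lemma \ref{lem-wt} and Theorem \ref{th-Johnson}, and it is precisely the critical boundary $2|\alpha|^2=1-a$ — excluded by hypothesis (ii) — that would otherwise leave no slack to move the diagonal entry while keeping the numerical radius equal to $1$.
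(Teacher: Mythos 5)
Your proof is correct, and after the common normalisation (your Step 1 is exactly the paper's opening: same use of \cite[Th. 2.6]{SMBP} to get $\langle Tx,y\rangle=-\overline{\langle Ty,x\rangle}$, same reduction of (i) and (ii) to $a\in\mathbb{R}$, $|a|<1$, $2|\alpha|^2\neq 1-a$) it takes a genuinely different route. The paper splits into three cases according to the sign of $4|\alpha|^2-(1-a)^2$: the degenerate case $4|\alpha|^2=(1-a)^2$ is settled by Lemma \ref{lem-tou}, the constraint $2|\alpha|^2+a-1\le 0$ is proved only inside its Case III by a bare-hands estimate of $\|\Re(e^{i\theta}T)\|$, and in Cases II and III the operator $T$ is written as the midpoint of two operators obtained by a crossed off-diagonal perturbation, with entries $\alpha+\beta$ and $-\overline{\alpha-\beta}$ (respectively $\alpha-\beta$ and $-\overline{\alpha+\beta}$). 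You instead establish the single inequality $4|\alpha|^2<2(1-a)$ in all cases at once, via Theorem \ref{th-Johnson} and the identity $w(T)^2-1=\bigl(2|\alpha|^2-(1-a)\bigr)^2/\bigl(4|\alpha|^2-(1-a)^2\bigr)$ in the regime $4|\alpha|^2>2(1-a)$ (I checked this identity; it is correct), and you then perturb the diagonal entry $a\mapsto a\pm\epsilon$ instead of the off-diagonal one. Your route buys two things. First, uniformity: no case split, and no need for Lemma \ref{lem-tou}, since the degenerate case lies strictly inside your open region. Second, and more importantly, rigour at the decomposition step: your summands $T_\pm$ are literally of the form treated in Lemma \ref{lem-wt}, so the appeal to that lemma is airtight, whereas the paper's summands are not of that form (their $(2,1)$ entry is not the negative conjugate of their $(1,2)$ entry), and in fact the elliptical range theorem gives, e.g. for $\beta$ a small nonzero real multiple of $\alpha$ in the paper's Case II, $w(T_1)=\frac{1+a}{2}+\sqrt{\frac{(1-a)^2}{4}+|\beta|^2}>1$; the matrices the paper's argument actually needs are the uncrossed ones, with entries $\alpha\pm\beta$ and $-\overline{\alpha\pm\beta}$, which also average to $T$. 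Your diagonal perturbation sidesteps this issue entirely. One point worth making explicit in your Step 3: the values of $\epsilon$ for which $4|\alpha|^2=\bigl(1-(a\pm\epsilon)\bigr)^2$ are finitely many, so they can indeed be avoided (and at such values $w(T_\pm)=1$ anyway, by the computation the paper performs in its Case I).
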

\begin{proof}
	Without loss of generality, we may assume that $\langle Tx,x\rangle=1.$ Assume that $\langle Ty,y\rangle =a.$ Then by (i), $a\in \mathbb{R}.$ Since $y\notin M_{w(T)},$ $|a|<1.$ Using the fact that $x\in M_{w(T)},$ 	and proceeding similarly as Theorem \ref{th-2-2}, we get that $\langle Tx,y\rangle=-\overline{\langle Ty,x\rangle}.$ Suppose that $\langle Ty,x\rangle=\alpha.$ Thus, with respect to the orthonormal basis $\{x,y\}$ the matrix of $T$ is 
	\[\left[ \begin{array}{cc}
		1 & \alpha \\
		-\overline{\alpha} & a
	\end{array} \right].
	\]
	We consider the following cases separately.
	\[\text{Case~ I}: 4|\alpha|^2=(1-a)^2, ~~~\text{~Case~ II}: 4|\alpha|^2<(1-a)^2,~~~\text{~Case~III}:4|\alpha|^2>(1-a)^2.\]
	Case I: Let $4|\alpha|^2=(1-a)^2.$ In this case, proceeding similarly as Lemma \ref{lem-wt} (ii), we can show that $T$ is weakly unitarily invariant to 
	\[\left[ \begin{array}{cc}
		\frac{1+a}{2}& 2\overline{\alpha}\\
		0& \frac{1+a}{2}
	\end{array} \right].
	\]
	Thus, by Lemma \ref{lem-tou}, $T$ is not a nu-extreme contraction.\\
	
	Case II: Let $4|\alpha|^2<(1-a)^2.$ Choose  a non-zero scalar $\beta$ such that $4|\alpha\pm \beta|^2<(1-a)^2.$ Consider $T_1,T_2\in L(H),$ whose matrix with respect to the basis $\{x,y\}$ are respectively
	\[ T_1=\left[ \begin{array}{cc}
		1 & \alpha +\beta\\
		-\overline{\alpha-\beta} & a
	\end{array} \right], ~\text{and~}
	T_2=\left[ \begin{array}{cc}
		1 & \alpha -\beta\\
		-\overline{\alpha+\beta} & a
	\end{array} \right]. 
	\]
	Then $T=\frac{1}{2}(T_1+T_2)$ and  $T_1\neq T\neq T_2.$ Now, by Lemma \ref{lem-wt} (ii), we get $w(T_1)=w(T_2)=1.$ Therefore, $T$ is not a nu-extreme contraction.\\
	
	Case III: Let $4|\alpha|^2>(1-a)^2.$
	It is well-known that $w(T)=\sup_{\theta\in \mathbb{R}}{\|\Re(e^{i\theta}T)\|}.$ Note that,
	\[ \Re(e^{i\theta}T)=\left[ \begin{array}{cc}
		\cos{\theta} & i\alpha\sin{\theta} \\
		-i\overline{\alpha}\sin{\theta} & a\cos{\theta}
	\end{array} \right].
	\]
	Now, the eigenvalues of $\Re(e^{i\theta}T)$ are 
	\[\frac{(1+a)\cos{\theta}\pm \sqrt{(1-a)^2\cos^2{\theta}+4|\alpha|^2\sin^2{\theta}}}{2}.\]
	Therefore, whenever $\cos{\theta}\geq 0,$ 
	\[\|\Re(e^{i\theta}T)\|=\frac{(1+a)\cos{\theta}+ \sqrt{(1-a)^2\cos^2{\theta}+4|\alpha|^2\sin^2{\theta}}}{2}.\]
	Note that, if $2|\alpha|^2+a-1>0$ holds, then there exists $n\in \mathbb{N},$ such that 
	\begin{equation*}
		n(2|\alpha|^2+a-1)>a+|\alpha|^2
	\end{equation*}
	holds. Now, using the last inequality, after an elementary calculation, we get 
	\begin{eqnarray}\label{eq-tou1}
		\frac{(1+a)(1-\frac{1}{n})+\sqrt{(1-a)^2(1-\frac{1}{n})^2+4|\alpha|^2(\frac{2}{n}-\frac{1}{n^2})}}{2}>1.
	\end{eqnarray}
	Choose $\theta\in \mathbb{R}$ such that $\cos{\theta}=1-\frac{1}{n}.$ Then $\sin^2{\theta}=\frac{2}{n}-\frac{1}{n^2}$ and from (\ref{eq-tou1}), we get
	\[\frac{(1+a)\cos{\theta}+ \sqrt{(1-a)^2\cos^2{\theta}+4|\alpha|^2\sin^2{\theta}}}{2}>1,\]
	i.e., $\|\Re(e^{i\theta}T)\|>1=w(T),$ which is a contradiction. Therefore, we must have
	\begin{equation}\label{eq-tou02}
		2|\alpha|^2+a-1\leq0.
	\end{equation}
Now, using (ii) and (\ref{eq-tou02}), we get	$2|\alpha|^2+a-1<0.$ Choose a non-zero scalar $\beta$ such that 
\begin{equation}\label{eq-tou2}
	2|\alpha\pm \beta|^2+a-1<0 ~\text{and ~} 4|\alpha\pm \beta|^2\neq (1-a)^2.
	\end{equation}
Consider $T_1,T_2\in L(H),$ whose matrix with respect to the basis $\{x,y\}$ are respectively
\[ T_1=\left[ \begin{array}{cc}
	1 & \alpha +\beta\\
	-\overline{\alpha-\beta} & a
\end{array} \right], ~\text{and~}
T_2=\left[ \begin{array}{cc}
	1 & \alpha -\beta\\
	-\overline{\alpha+\beta} & a
\end{array} \right]. 
\]
Then $T=\frac{1}{2}(T_1+T_2)$ and  $T_1\neq T\neq T_2.$ Now, using (\ref{eq-tou2}) and  Lemma \ref{lem-wt}, we get $w(T_1)=w(T_2)=1.$ Therefore, $T$ is not a nu-extreme contraction. This completes the proof of the theorem. 
\end{proof}

	\begin{remark}
	Once again, we would like to emphasize that  Theorem \ref{th-2com} provides us the description of nu-extreme contractions which are normaloid operators. Theorem \ref{th-2-2} provides us the description of nu-extreme contractions which are not normaloid operators and the numerical radius attaining set of the operator contains an orthonormal basis.  In Theorem 	\ref{th-2fin}, we have considered the remaining case, i.e., the case of all not normaloid operators whose numerical radius attaining set does not contain any orthonormal basis. Here we additionally assume some restrictions and get that the operator is not a nu-extreme contraction.  The last theorem also indicates that the study of not normaloid nu-extreme contractions is complicated even when the underlying space is two-dimensional. %We believe that even if the eigenvalues are not necessarily real in this case, then also the operator will not be a nu-extreme contraction. 
\end{remark}

		\bibliographystyle{amsplain}

	\end{document}